\providecommand{\U}[1]{\protect\rule{.1in}{.1in}}
\newtheorem{theorem}{Theorem}
\newtheorem{lemma}[theorem]{Lemma}
\newenvironment{proof}[1][Proof]{\noindent\textbf{#1.} }{\ \rule{0.5em}{0.5em}}
\begin{document}

\title{On a question related to a basic convergence theorem of Harish-Chandra}
\author{Nolan R. Wallach}
\maketitle

\begin{abstract}
In his first 1958 paper on zonal spherical functions Harish-Chandra proved an
extremely delicate convergence theorem which was basic to his subsequent
definition of his Schwartz space and his theory of cusp forms. This paper
gives elementary proofs that a related integral converges for for groups of
real rank one, several groups of real rank 2 (including $SO(n,2),Sp_{4}%
(\mathbb{R})$  and $Sp_{4}(\mathbb{C})$), $GL(n,\mathbb{R})$ and
$GL(n,\mathbb{C})$. In fact, a stronger result has been proved in
\cite{raphael}. Applications of the question are also studied.

\end{abstract}

\section{Introduction}

Let $G$ be a real reductive group (for example $GL(n,\mathbb{R}$) with maximal
compact subgroup $K$ ($O(n)$) \ and corresponding Cartan involution $\theta$
($g\mapsto(g^{T})^{-1})$ and let $G=\theta(N)AK$ be an Iwasawa decomposition
of $G$ (the Gram-Schmidt process) with $N$ a maximal unipotent subgroup (upper
triangular with ones on the main diagonal) and $A$ is the identity component
of a maximal $\mathbb{R}$--split torus of $G$ normalizing $N$ (the diagonal
matrices with positive entries). Let
\[
\alpha^{\rho}=(\det Ad(a)_{|_{Lie(N)}})^{\frac{1}{2}}%
\]
for $g\in G$, $g=vak,v\in\theta(N),a\in A,k\in K$ set $a(g)=a$. One of
Harish-Chandra's most delicate results (\cite{Spher1},c.f.\cite{RRGI}, Theorem
4.5.4 ) which is critical to his method of cusp forms in his proof of the
Plancherel Theorem for a real reductive group is that there exists $d$ such
that%
\[
\int_{N}a(n)^{-\rho}(1+\log a(n)^{\rho})^{-d}dn<\infty.
\]
To appreciate the delicacy of this result you should try to prove it by hand
in the case of $SL(4,\mathbb{R})$. Here if
\[
X=\left[
\begin{array}
[c]{cccc}%
1 & x_{1} & x_{2} & x_{3}\\
0 & 1 & x_{4} & x_{5}\\
0 & 0 & 1 & x_{6}\\
0 & 0 & 0 & 1
\end{array}
\right]
\]
then the Harish-Chandra theorem, combined with his result that there exist
$c_{1},c_{2}>0$ such that
\[
c_{1}(1+\log(1+\left\Vert x\right\Vert ^{2})\leq1+\log(a(n)^{\rho})\leq
c_{2}(1+\log(1+\left\Vert x\right\Vert ^{2}),
\]
is the same as the assertion that there exists $r$ such that%
\[
\int_{\mathbb{R}^{6}}\frac{dx}{\varphi(x)^{\frac{1}{2}}(1+\log(1+\left\Vert
x\right\Vert ^{2})^{r}}<\infty
\]
with%
\[
\varphi(x)=a(X)^{2\rho}=(1+x_{1}^{2}+x_{2}^{2}+x_{3}^{2})\times
\]%
\[
(1+x_{4}^{2}+x_{5}^{2}+(x_{2}-x_{1}x_{4})^{2}+(x_{3}-x_{1}x_{5})^{2}%
+(x_{3}x_{4}-x_{2}x_{5})^{2})\times
\]%
\[
(1+x_{6}^{2}+(x_{5}-x_{4}x_{6})^{2}+(x_{3}-x_{1}x_{5}-x_{2}x_{6}+x_{1}%
x_{4}x_{6})^{2})
\]
which is not obvious. In fact, except for the power of the log term in the
Harish-Chandra result it is best possible. Indeed, on can easily show that if
$\varphi$ is a real polynomial on $\mathbb{R}^{n}$ of degree $2n$ and
$\varphi(x)\geq1$ then
\[
\int_{\left\Vert x\right\Vert \leq r}\varphi(x)^{-\frac{1}{2}}dx\geq
C(1+\log(1+r^{2})).
\]

In this paper I study a related question.

Is
\[
\int_{\lbrack N,N]}a(n)^{-\rho}(1+\log a(n)^{\rho})^{d}dn<\infty
\]
for all $d>0$?

For $GL(4,\mathbb{R})$, noting that $n\in\lbrack N,N]$ if and only if
$x_{1}=x_{4}=x_{6}=0$ one has
\[
\varphi(0,x_{2},x_{3},0,x_{5},0)=(1+x_{2}^{2}+x_{3}^{2})((1+x_{2}^{2}%
)(1+x_{5}^{2})+x_{3}^{2})(1+x_{3}^{2}+x_{5}^{2}).
\]
So one can show that%
\[
\varphi(0,x_{2},x_{3},0,x_{5},0)\geq(1+x_{2}^{2})^{\frac{7}{6}}(1+x_{3}%
^{2})^{\frac{7}{6}}(1+x_{5}^{2})^{\frac{7}{6}}
\]
and, thus, the answer to our question for $GL(4,\mathbb{R})$ is yes. However,
In the case of $GL(5,\mathbb{R})$ the integral in the question is just as
intractable as the Harish-Chandra integral for $GL(4,\mathbb{R)}$.

An affirmative answer would imply that if $f$ satisfies Harish-Chandra's weak
inequality and if $P$ is a parabolic subgroup of $G$ with unipotent radical
$N_{P}$ (see section \ref{prelim} for the meanings of these terms) then%
\[
\int_{\lbrack N_{P},N_{P}]}\left\vert f(n)\right\vert dn<\infty.
\]
and if $g$ is in the Schwartz space of $[N_{P},N_{P}]\backslash N_{P}$ (which
is isomorphic with $\mathbb{R}^{\dim[N_{P},N_{P}]\backslash N_{P}}$) then%
\[
\int_{\lbrack N_{P},N_{P}]\backslash N_{P}}\int_{[N_{P},N_{P}]}f(nx)dng(x)dx
\]
defines a tempered distribution on $[N_{P},N_{P}]\backslash N_{P}$. This can
be used to give a more direct proof of Harish-Chandra's formula for the
Harish-Chandra transform of wave packets and the relationship between the
Fourier transform of a Harish-Chandra wave packet and a Whittaker wave packet.

The main result of this paper is an elementary proof of an affirmative answer
for $GL(n,F),F=\mathbb{R}$ or $\mathbb{C}$ ( in fact for any group whose
commutator group os locally isomorphic with $SL(n,F)$) . In addition it is
observed that the answer is yes for real rank one groups and for certain
groups of real rank 2 (including $Sp_{4}(F),F=\mathbb{R}$ or $\mathbb{C}$,
$SO(n,2)$ and $SU(2,2)$).

The author has been informed that the answer is affirmative in all cases by
Rapha\"{e}l Beauzart-Plessis who points out that Proposition B.3.1 in
\cite{raphael}\ implies the stronger assertion%
\[
\int_{\lbrack N,N]}a(n)^{-\rho+\varepsilon\rho}dn<\infty
\]
for some $\varepsilon>0$. The methods in this paper are much more elementary
than those in \cite{raphael} so even though they are superseded they still may
be useful. Especially to thos who are interested in a simple proof of the
Harish-Chandra Plancherel and Whittaker Plancherel Therem for $GL(n)$.

We will now describe the organization of this paper. After some preliminaries
(in which implications in this introduction are explained), we first prove
that the answer to the question is yes for groups of real rank 1 and for some
real rank 2 groups. The rest of the paper is devoted to the case when
$G=SL(n,F)$,$F=\mathbb{R}$ or $\mathbb{C}$. For this we first give a (perhaps)
new proof of Harish-Chandra's result which in particular shows that the $d$ in
the \ statement can be taken to be $\dim N+\varepsilon$ for any $\varepsilon
>0$. We then follow the steps in that proof to prove that the integrals in
question are finite by induction on $n$. In order to carry out the induction a
stronger result is proved.

\section{Preliminaries\label{prelim}}

For lack of a term we will call a real reductive group tame if the answer to
our question above is yes. Note that $G$ is tame if and only if every normal,
simple subgroup of $G$ is tame. So in this paper we will assume that $G$ is
simple. Let $K$ be a maximal compact subgroup of $G$ and let $G=NAK$ be an
Iwasawa decomposition of $G$. Let $\Phi(N)$ be the set of weights of $A$ on
$Lie(N)$. Set
\[
A^{+}=\{a\in A|a^{\alpha}\geq1,\alpha\in\Phi(N)\}.
\]
Let $a^{\rho}=\det(Ad(a))$, as in the introduction. Noting that $G=KA^{+}K$,
set%
\[
\left\Vert k_{1}ak_{2}\right\Vert =a^{\rho},k_{1},k_{2}\in K,a\in A^{+}.
\]
We note that this expression depends only on $g=k_{1}ak_{2}$. Indeed, if we
put an $Ad(G)$ invariant symmetric bilinear form on $Lie(G)$, $B$, \ such that
if $\theta$ \ is the Cartan involution corresponding to $K$ then%
\[
\left\langle X,Y\right\rangle =-B(X,\theta Y)
\]
defines an inner product on $Lie(G)$. Then if $n=\dim N$ then $\left\Vert
g\right\Vert ^{2}$ is equal to the operator norm of $\wedge^{n}Ad(g)$ relative
to the norm corresponding to $\left\langle ...,...\right\rangle $ on
$\wedge^{n}Lie(G)$. This implies that $\left\Vert ...\right\Vert $ satisfies
the following properties

1. $\left\Vert xy\right\Vert \leq\left\Vert x\right\Vert \left\Vert
y\right\Vert ,x,y\in G.$

2. The sets $\{g\in G|\left\Vert g\right\Vert \leq r\}$ are compact.

3. If $X\in Lie(G)$ satisfies $\theta X=-X$ and $t>0$ then%
\[
\left\Vert \exp tX\right\Vert =\left\Vert \exp X\right\Vert ^{t}.
\]
Since the operator norm and the Hilbert-Schmidt norm are equivalent in a
finite dimensional Hilbert space and if $X\in Lie(N)$, $\wedge^{n}Ad(\exp X) $
has coefficients that are polynomials in $X$ of degree at most $un$ if
$\left(  adX\right)  ^{u+1}=0$. Thus there exists a constant, $C>0$, such that%
\[
\left\Vert \exp X\right\Vert \leq C(1+\left\Vert X\right\Vert )^{\frac{un}{2}%
}.
\]
Also recall the following result of Harish-Chandra \cite{Spher1},c.f.
\cite{RRGI}, Theorem 4.5.3.

\begin{theorem}
There exist $C_{1}$ and $r$ such that
\[
\left\Vert g\right\Vert ^{-1}\leq\Xi(g)\leq C_{1}\left\Vert g\right\Vert
^{-1}(1+\log\left\Vert g\right\Vert )^{r}.
\]

\end{theorem}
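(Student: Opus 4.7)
The plan is to exploit bi-$K$-invariance of both $\Xi$ and $\|\cdot\|$ to reduce both inequalities to the case $g = a \in A^+$, where $\|g\|^{-1} = a^{-\rho}$. Throughout I work with the standard integral representation
\[
\Xi(a) = \int_K a(ak)^{-\rho}\,dk = \int_K e^{-\rho(H(ak))}\,dk,
\]
writing $H(g) = \log a(g) \in \mathrm{Lie}(A)$ and normalizing Haar measure on $K$ to have total mass one.

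For the lower bound I would invoke the Kostant/Harish-Chandra convexity theorem: for $a \in A^+$ and $k \in K$, the vector $H(ak)$ lies in the convex hull of the Weyl orbit $W\cdot\log a$ inside $\mathrm{Lie}(A)$. Since $\rho$ is dominant and $\log a$ lies in the closed positive Weyl chamber, $\rho$ attains its maximum on $W\cdot\log a$ (hence on its convex hull) precisely at $\log a$, giving $\rho(H(ak)) \leq \rho(\log a)$ for every $k \in K$. Exponentiating yields $e^{-\rho(H(ak))} \geq a^{-\rho}$, and integrating over $K$ produces $\Xi(a) \geq a^{-\rho} = \|a\|^{-1}$.

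For the upper bound I would push the $K$-integral onto $\bar N = \theta(N)$ via the Iwasawa parametrization $\bar n \mapsto k(\bar n)$ of a dense open subset of $K$, whose Jacobian is $a(\bar n)^{-2\rho}\,d\bar n$. After standard manipulation this yields an expression of the shape
\[
\Xi(a) = a^{-\rho}\int_{\bar N} a(\bar n)^{-\rho}\,a(a\bar n a^{-1})^{-\rho}\,d\bar n.
\]
I would then decompose $\bar N$ into dyadic shells adapted to the conjugation action of $a$ on each restricted root subspace of $\mathrm{Lie}(\bar N)$. On each shell the integrand is essentially constant, and the number of non-negligible shells along each root direction grows linearly in $\log a^{\rho}$; multiplying across the finitely many root directions produces a polynomial factor in $\log a^{\rho}$ of degree at most $\dim N$, giving the desired bound $\Xi(a) \leq C_1 a^{-\rho}(1+\log a^{\rho})^r$.

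The main obstacle is ensuring that the dyadic estimate yields an exponent $r$ uniform in the direction of $a$ within $A^+$, in particular up to the walls of the Weyl chamber. In the language of Harish-Chandra's asymptotic series, this is the problem of seeing how the apparent $\lambda = 0$ poles of the $c$-function combine across the Weyl group to give merely polynomial — rather than singular — growth uniformly up to the walls; this is precisely the delicate combinatorial core of Harish-Chandra's original proof in \cite{Spher1}, and is where the bulk of the work resides.
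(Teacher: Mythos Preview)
The paper does not prove this theorem: it is stated as a result of Harish-Chandra, with the citation ``\cite{Spher1}, c.f.\ \cite{RRGI}, Theorem 4.5.4'' (the text actually refers to 4.5.3), and is then used as a black box in the proof of Lemma~\ref{polyest}. There is therefore no proof in the paper to compare your proposal against.

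On the merits of your sketch itself: the lower bound via Kostant's convexity theorem is a clean and correct argument. Your treatment of the upper bound, however, is not a proof but an outline that ends by pointing back to Harish-Chandra's original argument. You correctly isolate the real issue --- uniformity of the exponent $r$ as $a$ approaches the walls of $A^{+}$ --- but the dyadic shell heuristic you describe does not by itself deliver that uniformity: counting ``non-negligible shells along each root direction'' linearly in $\log a^{\rho}$ tacitly assumes that the contributions from different root directions decouple, which is exactly what fails near the walls (the conjugation $a\bar n a^{-1}$ contracts some root spaces only weakly there, and the factors $a(\bar n)^{-\rho}$ and $a(a\bar n a^{-1})^{-\rho}$ interact). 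The standard route through this (as in \cite{RRGI}, or Gangolli--Varadarajan) requires either Harish-Chandra's expansion of $\varphi_{\lambda}$ and a careful analysis at $\lambda=0$, or a direct but still substantial estimate on the $\bar N$-integral; your last paragraph acknowledges this but does not supply it. So as written, the proposal proves the lower bound and gives only a roadmap --- with the hard step deferred to the literature --- for the upper bound.
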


We will now prove an estimate that is a consequence of tameness and show how
it relates to the applications asserted in the introduction.

\begin{lemma}
\label{polyest}Let $Z$ \ be a subspace of $Lie(N)$ such that $Lie(N)=Z\oplus
Lie([N,N])$. There exists $s\in\mathbb{R}$ and $\ $for each $\omega\subset G$
a compact subset a constant $C_{\omega}$ such that if $X\in Z$ and if
$g\in\omega$ such that if $d$ is given there exists $C_{d}$ such that
\[
\int_{\lbrack N,N]}\Xi(n\exp Xg)(1+\log\left\Vert n\right\Vert )^{d}dn\leq
C_{\omega}C_{d}(1+\left\Vert X\right\Vert )^{s}.
\]

\end{lemma}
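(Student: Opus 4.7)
The plan is to bound the integrand pointwise via Harish-Chandra's upper estimate on $\Xi$ and the submultiplicativity of $\|\cdot\|$, then reduce the remaining $n$-integral to the tameness hypothesis on $G$.

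I first apply the estimate $\Xi(y)\leq C_{1}\|y\|^{-1}(1+\log\|y\|)^{r}$ (from the theorem quoted just above) with $y=n\exp(X)h$, $h\in\omega$. Compactness of $\omega$ gives $M_{\omega}:=\sup_{h\in\omega}\max(\|h\|,\|h^{-1}\|)<\infty$, so submultiplicativity applied to the identity $n\exp(X)=(n\exp(X)h)h^{-1}$ yields $\|n\exp(X)h\|^{-1}\leq M_{\omega}\|n\exp(X)\|^{-1}$ and $\log\|n\exp(X)h\|\leq\log\|n\exp(X)\|+\log M_{\omega}$, absorbing $h$ into $\omega$-dependent constants. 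To pass from $\|n\exp(X)\|$ to $\|n\|$, use submultiplicativity once more in the form $\|n\|\leq\|n\exp(X)\|\cdot\|\exp(-X)\|$, combined with the polynomial bound $\|\exp(-X)\|\leq C(1+\|X\|)^{un/2}$ proved in the preliminaries; this gives $\|n\exp(X)\|^{-1}\leq C(1+\|X\|)^{un/2}\|n\|^{-1}$ and $\log\|n\exp(X)\|\leq\log\|n\|+\tfrac{un}{2}\log(1+\|X\|)+O(1)$. Using $1+a+b\leq(1+a)(1+b)$ for $a,b\geq 0$ and absorbing the factor $(1+\log(1+\|X\|))^{r}$ into a small power of $(1+\|X\|)$, one arrives at
\[
\Xi(n\exp(X)h)(1+\log\|n\|)^{d}\leq C_{\omega}(1+\|X\|)^{s}\,\|n\|^{-1}(1+\log\|n\|)^{d+r}
\]
for any $s>un/2$.

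Finally I integrate in $n\in[N,N]$. For $n\in N$ the norm $\|n\|$ is comparable (within bounded multiplicative constants) to $a(n)^{\rho}$, by the standard relation between the $KAK$ and Iwasawa decompositions; hence
\[
\int_{[N,N]}\|n\|^{-1}(1+\log\|n\|)^{d+r}\,dn
\]
is bounded by a constant multiple of the tameness integral with parameter $d+r$, which is finite by the tameness hypothesis on $G$. Setting $C_{d}$ to this value times the accumulated constants completes the estimate. The only step demanding any care is the comparison $\|n\|\asymp a(n)^{\rho}$ on $N$; it can be bypassed by substituting the equivalent bound $\Xi(n)\leq C a(n)^{-\rho}(1+\log a(n)^{\rho})^{r}$ for the $\|\cdot\|$-version of Harish-Chandra's estimate, matching the tameness integral directly.
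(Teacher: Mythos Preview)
Your strategy is the same as the paper's: bound $\Xi$ above by $C_{1}\lVert\cdot\rVert^{-1}(1+\log\lVert\cdot\rVert)^{r}$, peel off $h\in\omega$ and $\exp X$ by submultiplicativity together with the polynomial bound $\lVert\exp(-X)\rVert\le C(1+\lVert X\rVert)^{un/2}$, and then reduce the remaining $[N,N]$--integral to the tameness hypothesis.

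The one real gap is the step you flag yourself. The claimed two–sided comparison $\lVert n\rVert\asymp a(n)^{\rho}$ on $N$ is \emph{not} a ``standard relation'' and in fact fails already for $SL(3,\mathbb{R})$: for $n=\exp(x E_{12})$ one has $a(n)^{\rho}=(1+x^{2})^{1/2}$ while $\lVert n\rVert$ grows like $x^{2}$. Only the one–sided inequality $\lVert n\rVert^{-1}\le a(n)^{-\rho}$ is needed, and the paper supplies it directly: take $v\neq 0$ in the line $\wedge^{\dim N}\theta(\mathrm{Lie}(N))$; then $\bar N$ fixes $v$ and $A$ acts on $v$ by the character $a\mapsto a^{-2\rho}$, so from $n=\bar u\,a(n)\,k$ one gets $\lVert n^{-1}v\rVert=a(n)^{2\rho}\lVert v\rVert$, while $\lVert n^{-1}v\rVert\le\lVert n^{-1}\rVert^{2}\lVert v\rVert=\lVert n\rVert^{2}\lVert v\rVert$ by the very definition of $\lVert\cdot\rVert$. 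That gives $a(n)^{\rho}\le\lVert n\rVert$, which is all you use. Your proposed bypass via an Iwasawa--$a$ version of the $\Xi$--estimate is not straightforward: the Iwasawa projection is not submultiplicative, so you cannot strip off $\exp X$ and $h$ the same way once you leave the $\lVert\cdot\rVert$ framework.

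For the logarithmic factor you still need $(1+\log\lVert n\rVert)\le C(1+\log a(n)^{\rho})$; this does hold (both sides are comparable to $1+\log(1+\lVert X\rVert)$ for $n=\exp X$, by the bound $\lVert\exp X\rVert\le C(1+\lVert X\rVert)^{un/2}$ and Harish-Chandra's lower estimate on $a(n)^{\rho}$ mentioned in the introduction), and the paper uses it implicitly at the same spot.
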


\begin{proof}
In light of the above theorem it is enough to prove that for all $d$%
\[
\int_{\lbrack N,N]}\left\Vert n\exp Xg\right\Vert ^{-1}(1+\log\left\Vert
n\right\Vert )^{d}dn\leq C_{\omega}C_{d}(1+\left\Vert X\right\Vert )^{s}.
\]
If $n\in\lbrack N,N],g\in\omega,X\in Z$ then%
\[
\left\Vert n\right\Vert =\left\Vert n\exp Xgg^{-1}\exp(-X)\right\Vert
\leq\left\Vert n\exp Xg\right\Vert \left\Vert g^{-1}\exp(-X)\right\Vert
\]%
\[
\leq\left\Vert n\exp Xg\right\Vert \left\Vert g^{-1}\right\Vert \left\Vert
\exp(-X)\right\Vert \leq\left\Vert n\exp Xg\right\Vert \left\Vert B_{\omega
}\right\Vert C(1+\left\Vert X\right\Vert )^{\frac{un}{2}}
\]
Since $\left\Vert g^{-1}\right\Vert \leq B_{\omega}$ for $g\in\omega$ and
$\left\Vert \exp(-X)\right\Vert \leq C(1+\left\Vert X\right\Vert )^{\frac
{un}{2}}$ \ So%
\[
\left\Vert n\exp Xg\right\Vert ^{-1}\leq CB_{\omega}\left\Vert n\right\Vert
^{-1}(1+\left\Vert X\right\Vert )^{\frac{n}{2}}.
\]
Let $v$ \ be a non-zero element of $\wedge^{n}\theta Lie(N)$. Then if
$n=ua(n)k$ with $u\in\theta N,a(n)\in A.k\in K$ then%
\[
\left\Vert n^{-1}\right\Vert ^{2}\left\Vert v\right\Vert \geq\left\Vert
n^{-1}v\right\Vert =a(n)^{2\rho}\left\Vert v\right\Vert
\]
so
\[
\left\Vert n^{-1}\right\Vert ^{-1}\leq a(n)^{-\rho}.
\]
Finally, we have (here we are using $\left\Vert g\right\Vert =\left\Vert
g^{-1}\right\Vert $)
\[
\int_{\lbrack N,N]}\left\Vert n\exp Xg\right\Vert ^{-1}(1+\log\left\Vert
n\right\Vert )^{d}dn\leq CB_{\omega}(1+\left\Vert X\right\Vert )^{\frac{n}{2}%
}\int_{[N,N]}\left\Vert n\right\Vert ^{-1}(1+\log\left\Vert n\right\Vert
)^{d}dn
\]%
\[
=CB_{\omega}(1+\left\Vert X\right\Vert )^{\frac{n}{2}}\int_{[N,N]}\left\Vert
n^{-1}\right\Vert ^{-1}(1+\log\left\Vert n^{-1}\right\Vert )^{d}dn
\]%
\[
\leq CB_{\omega}(1+\left\Vert X\right\Vert )^{\frac{n}{2}}\int_{[N,N]}%
a(n)^{-\rho}(1+\log\left\Vert n\right\Vert )^{d}dn.
\]
Completing the proof of the lemma.
\end{proof}

A subgroup $P$ of $G$ is said to be a parabolic subgroup if it contains a
conjugate of the normalizer of $N$ in $G$ and it is its own normalizer. The
unipotent radical of $P$, $N_{P}$, is the maximal normal nilpotent subgroup of
$G$. Thus up to conjugacy we may assume that $P$ contains $N$. If so, we will
call $P$ standard. Assume that $P$ is standard. Let $M_{P}=\theta(P)\cap P$
and $N_{P}^{\ast}=M_{P}\cap N$. The map $M_{P}\times N_{P}\rightarrow P$ given
by multiplication is a diffeomorphism. This implies that $N=N_{P}N^{\ast}$
with unique expression. As in the proof of the preceding lemma the assertions
that are consequences of tameness follow from the next Lemma and that a
function, $f$, on $G$ is said to satisfy the weak inequality if there exist
constants $C$ and $d$ such that%
\[
\left\vert f(g)\right\vert \leq C\Xi(g)(1+\log\left\Vert g\right\Vert )^{d}.
\]

\begin{lemma}
$\int_{[N_{P},N_{P}]}a(n)^{-\rho}(1+\log\left\Vert n\right\Vert )^{d}%
dn<\infty$.
\end{lemma}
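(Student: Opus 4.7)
The plan is to deduce this estimate from the tameness of $G$ by a Fubini argument exploiting the inclusion $[N_P,N_P]\subseteq[N,N]$. The key structural observation is that $[N_P,N_P]$ is normal in $[N,N]$: it is characteristic in $N_P$ (as its commutator subgroup), and $N_P$ is normal in $N$ since $P\supset N$ is a standard parabolic and $N_P$ is its unipotent radical. Let $Q=[N,N]/[N_P,N_P]$, a unimodular nilpotent Lie group, and fix a measurable section $s:Q\to[N,N]$ with $s(e)=e$. Setting $f(n)=a(n)^{-\rho}(1+\log\|n\|)^{d}$, disintegration of Haar measure gives
\[
\int_{[N,N]}f(n)\,dn=\int_{Q}\int_{[N_P,N_P]}f(h\,s(q))\,dh\,dq.
\]

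Let $K_{0}\subset Q$ be a compact neighborhood of $e$ and put $S_{0}=s(K_{0})\subset[N,N]$. The crux of the argument is a uniform local lower bound: there exists $c>0$ such that $f(h\ell)\geq cf(h)$ for all $h\in[N_P,N_P]$ and $\ell\in S_{0}$. Granted this,
\[
\int_{[N,N]}f(n)\,dn\geq\int_{[N_P,N_P]}\int_{K_{0}}f(h\,s(q))\,dq\,dh\geq c\,\mathrm{vol}(K_{0})\int_{[N_P,N_P]}f(h)\,dh,
\]
and tameness of $G$ forces the right-hand integral to be finite.

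To prove the local lower bound, set $C_{0}=\sup_{\ell\in S_{0}}\|\ell^{-1}\|<\infty$. Submultiplicativity of $\|\cdot\|$ yields $\|h\|=\|(h\ell)\ell^{-1}\|\leq C_{0}\|h\ell\|$ and symmetrically $\|h\ell\|\leq C_{0}\|h\|$; since $h$ is unipotent, $\|h\|\geq 1$, and an elementary log estimate gives $(1+\log\|h\ell\|)^{d}\geq c_{1}(1+\log\|h\|)^{d}$. For the $A$-component I invoke, as in the proof of Lemma \ref{polyest}, the identity $a(g)^{2\rho}\|v\|=\|g^{-1}v\|$ for a nonzero $v\in\wedge^{\dim N}\theta\,Lie(N)$. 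Applying this to $g=h\ell$,
\[
a(h\ell)^{2\rho}\|v\|=\|\ell^{-1}h^{-1}v\|\leq\|\ell^{-1}\|^{2}\|h^{-1}v\|\leq C_{0}^{2}\,a(h)^{2\rho}\|v\|,
\]
whence $a(h\ell)^{-\rho}\geq C_{0}^{-1}a(h)^{-\rho}$. Combining produces $f(h\ell)\geq c_{1}C_{0}^{-1}f(h)$, as required.

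The delicate step is the local lower bound and, within it, the control of $a(h\ell)^{-\rho}$ under right multiplication by an element of a compact set; this is handled by the same wedge-power realization of $a(g)^{2\rho}$ that drove the proof of Lemma \ref{polyest}. The remaining ingredients -- normality of $[N_P,N_P]$ in $[N,N]$, Haar measure disintegration, and the submultiplicative norm bookkeeping -- are routine.
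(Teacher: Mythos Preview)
Your proof is correct and follows the same two–step plan as the paper: a Fubini argument over $[N,N]$ with inner integration over $[N_P,N_P]$, followed by a uniform comparison $f(h\ell)\geq c\,f(h)$ obtained from the wedge-power identity $a(g)^{2\rho}\|v\|=\|g^{-1}v\|$ and submultiplicativity of $\|\cdot\|$. The only difference is in how the Fubini step is set up: the paper asserts a product decomposition $[N,N]=[N_P,N_P]\,[N_P^{\ast},N_P^{\ast}]$ and then fixes one $n^{\ast}$ for which the inner integral is finite, whereas you pass to the quotient $Q=[N,N]/[N_P,N_P]$ and integrate over a compact neighborhood of the identity in $Q$. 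Your formulation is in fact a bit more robust, since the paper's product claim is not literally true in general (for instance, when $P$ is maximal with abelian $N_P$ one has $[N_P,N_P]=[N_P^{\ast},N_P^{\ast}]=\{e\}$ while $[N,N]$ need not be trivial); your quotient argument sidesteps this and needs only the normality of $[N_P,N_P]$ in $[N,N]$, which you justify correctly. The comparison step is identical to the paper's.
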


\begin{proof}
We note that $[N,N]=[N_{P},N_{P}][N_{P}^{\ast},N_{P}^{\ast}]$ and the Haar
measure on $[N,N]$ is after appropriate normalizations equal to the product
measure. Thus Fubini's Theorem implies that%
\[
\int_{\lbrack N,N]}a(n)^{-\rho}(1+\log\left\Vert n\right\Vert )^{d}%
dn=\int_{[N_{P},N_{P}]\times\lbrack N_{P}^{\ast},N_{P}^{\ast}]}a(nn^{\ast
})^{-\rho}(1+\log\left\Vert nn^{\ast}\right\Vert )^{d}dndn^{\ast}
\]
and for almost all $n^{\ast}\in\lbrack N_{P}^{\ast},N_{P}^{\ast}]$
\[
\overset{}{(\ast)}\int_{[N_{P},N_{P}]}a(nn^{\ast})^{-\rho}(1+\log\left\Vert
nn^{\ast}\right\Vert )^{d}dn<\infty.
\]
Now if $v$ is as in the proof of the previous lemma then
\[
a(nn^{\ast})^{2\rho}=\left\Vert \left(  n^{\ast}\right)  ^{-1}n^{-1}%
v\right\Vert \leq\left\Vert \left(  n^{\ast}\right)  ^{-1}\right\Vert
a(n)^{2\rho}
\]
so
\[
a(n)^{2\rho}\geq\left\Vert n^{\ast}\right\Vert ^{-1}a(nn^{\ast})^{2\rho}
\]
also $\left\Vert nn^{\ast}\right\Vert \leq\left\Vert n\right\Vert \left\Vert
n^{\ast}\right\Vert $ and
\[
\left\Vert n\right\Vert =\left\Vert nn^{\ast}(n^{\ast})^{-1}\right\Vert
\leq\left\Vert n^{\ast}\right\Vert \left\Vert nn^{\ast}\right\Vert
\]
so for fixed $n^{\ast}$,$1+\log\left\Vert nn^{\ast}\right\Vert \geq
C(1+\log\left\Vert n\right\Vert )$ with $C>0$. This if $n^{\ast}$ satisfies
$(\ast)$ then there exists a constant $C_{1}$ such that%
\[
\int_{\lbrack N_{P},N_{P}]}a(n)^{-\rho}(1+\log\left\Vert n\right\Vert
)^{d}dn\leq C_{1}\int_{[N_{P},N_{P}]}a(nn^{\ast})^{-\rho}(1+\log\left\Vert
nn^{\ast}\right\Vert )^{d}dn<\infty.
\]

\end{proof}

We will be using the following simple estimates often in this paper.

\begin{lemma}
\label{inequalities}Let $A_{1},...,A_{n}\geq0,a_{1},...,a_{n}\geq0$ then%
\[
(1+A_{1}+...+A_{n})^{a_{1}+...+a_{n}}\geq(1+A_{1})^{a_{1}}(1+A_{2})^{a_{2}%
}\cdots(1+A_{n})^{a_{n}}
\]
and also
\[
\sum_{i=1}^{n}\log(1+A_{i})\geq\log(1+\sum_{i=1}^{n}A_{i})\geq\frac{1}{n}%
\sum_{i=1}^{n}\log(1+A_{i})\text{.}
\]

\end{lemma}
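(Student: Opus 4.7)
The plan is to deduce both statements from the single elementary observation that $1+A_1+\cdots+A_n \geq 1+A_i$ for each $i$, which holds because all $A_j$ are nonnegative. From there, each inequality becomes a one-line argument, and the main (very minor) obstacle is just being careful with the direction of inequality when taking logs or raising to powers.

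For the first inequality, I would fix $i$ and use $1+A_1+\cdots+A_n \geq 1+A_i$. Since $a_i\geq 0$, raising both sides to the power $a_i$ preserves the inequality: $(1+A_1+\cdots+A_n)^{a_i} \geq (1+A_i)^{a_i}$. Taking the product over $i=1,\ldots,n$ and using $\prod_i (1+A_1+\cdots+A_n)^{a_i} = (1+A_1+\cdots+A_n)^{a_1+\cdots+a_n}$ yields the claim.

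For the second inequality, the left-hand estimate $\sum_i \log(1+A_i) \geq \log(1+\sum_i A_i)$ is equivalent (by exponentiating) to $\prod_i(1+A_i) \geq 1+\sum_i A_i$. But expanding the product gives $1+\sum_i A_i$ plus a sum of products $A_{i_1}\cdots A_{i_k}$ with $k\geq 2$, all of which are nonnegative; so the inequality is immediate. The right-hand estimate $\log(1+\sum_i A_i) \geq \frac{1}{n}\sum_i\log(1+A_i)$ is equivalent (by multiplying by $n$ and exponentiating) to $(1+\sum_i A_i)^n \geq \prod_i(1+A_i)$. This follows by applying the first inequality (or the same base observation) with $a_i=1$ for each $i$: since $1+\sum_j A_j\geq 1+A_i$ for every $i$, multiplying these $n$ inequalities gives exactly $(1+\sum_j A_j)^n \geq \prod_i(1+A_i)$.

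There is essentially no obstacle here; the whole lemma is a bookkeeping exercise around the trivial pointwise bound $1+\sum_j A_j \geq 1+A_i$. I would present the argument in the order above so that the first inequality is derived first, and then the two halves of the second inequality are derived as, respectively, an expansion of the product and a specialization of the first inequality.
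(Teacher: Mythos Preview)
Your proof is correct and essentially identical to the paper's own argument: the paper also writes $(1+\sum_j A_j)^{\sum_i a_i}=\prod_i(1+\sum_j A_j)^{a_i}\geq\prod_i(1+A_i)^{a_i}$ for the first inequality, deduces the right half of the second inequality by specializing to $a_i=1$, and obtains the left half from $\prod_i(1+A_i)\geq 1+\sum_i A_i$. There is no meaningful difference in approach.
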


\begin{proof}%
\[
(1+A_{1}+...+A_{n})^{a_{1}+...+a_{n}}=\prod_{i=1}^{n}(1+A_{1}+...+A_{n}%
)^{a_{i}}\geq\prod_{i=1}^{n}(1+A_{i})^{a_{i}}
\]
proving the first assertion. Using it we have%
\[
(1+A_{1}+...+A_{n})^{n}\geq\prod_{i=1}^{n}(1+A_{i})
\]
implying the right side of the second inequality. The inequality on the
follows from%
\[
\prod_{i=1}^{n}(1+A_{i})\geq1+\sum A_{i}.
\]

\end{proof}

\section{Groups of real rank 1}

Let $G$ be a connected, real reductive group of split rank 1 with finite
center let $K$ be a maximal compact subgroup and $\theta$ the corresponding
Cartan involution, let $P$ be a non-trivial proper parabolic subgroup of $G$
with unipotent radical $N$. Let $G=\bar{N}AK$ be an Iwasawa decomposition of
$G$. If $\mathfrak{n}=Lie(N)$ and $\mathfrak{a}=Lie(A)$ then $\Phi
(P,A)=\{\lambda\}$ if and only if $\mathfrak{n}$ is commutative and
$\Phi(P,A)=\{\lambda,2\lambda\}$ otherwise. If the $\alpha$ root space in
$\mathfrak{n}$ is denoted $\mathfrak{n}_{\alpha}$ then $[\mathfrak{n,n}%
]=\mathfrak{n}_{2\lambda}$. If $X\in\mathfrak{n}_{\lambda}$ and $Y\in
\mathfrak{n}_{2\lambda}$ and if $m_{i\lambda}=\dim\mathfrak{n}_{i\lambda} $
then (c.f. \cite{HarmAn} Lemma 8.10.13)%
\[
a_{\bar{P}}(\exp(X+Y)^{\rho}=((1+\frac{\lambda(H_{\lambda})}{2}\left\Vert
X\right\Vert ^{2})^{2}+2\lambda(H_{\lambda})\left\Vert Y\right\Vert
^{2})^{\frac{m_{\lambda}}{4}+\frac{m_{2\lambda}}{2}}
\]
with $H_{\lambda}$ the element of $\mathfrak{a}$ such that $(H_{\lambda
},h)=\lambda(h),$ $h\in\mathfrak{a}$ and $(...,...)$ is the inner product on
$Lie(G)$ given by $(U,V)=-B(U,\theta V)$ \ with $B$ the killing form \ and
$\left\Vert U\right\Vert ^{2}=(U,U)$, as usual. Thus
\[
\int_{\lbrack N,N]}a_{\bar{P}}(n)^{-\rho_{P}}(1+\log\left\Vert n\right\Vert
)^{r}dn=
\]%
\[
\int_{\mathfrak{n}_{2\lambda}}(1+2\lambda(H_{\lambda})\left\Vert Y\right\Vert
^{2})^{-\frac{m_{\lambda}}{4}-\frac{m_{2\lambda}}{2}}(1+\log(1+(1+\left\Vert
Y\right\Vert ^{2}))^{r}dY<\infty
\]
for all $r$ since $m_{\lambda}>1$ if $\mathfrak{n}_{2\lambda}\neq0$ .

\section{$Sp_{4}(\mathbb{R})$ and $Sp_{4}(\mathbb{C})$}

Let $F=\mathbb{R}$ or $\mathbb{C}$. Set%
\[
L=\left[
\begin{array}
[c]{cc}%
0 & 1\\
1 & 0
\end{array}
\right]
\]
and%
\[
J=\left[
\begin{array}
[c]{cc}%
0 & L\\
-L & 0
\end{array}
\right]  .
\]
We realize $G=Sp_{4}(F)$ as
\[
G=\{g\in GL(2.F)|gJg^{-1}\}.
\]
We choose $P$ to be the upper triangular elements of $G$. Thus the lie algebra
of $N$ is
\[
\mathfrak{n}=\left\{  \left[
\begin{array}
[c]{cccc}%
0 & x & y & z\\
0 & 0 & w & y\\
0 & 0 & 0 & -x\\
0 & 0 & 0 & 0
\end{array}
\right]  |x,y,z,w\in F\right\}  .
\]
So%
\[
\lbrack\mathfrak{n},\mathfrak{n}]=\left\{  \left[
\begin{array}
[c]{cccc}%
0 & 0 & y & z\\
0 & 0 & 0 & y\\
0 & 0 & 0 & 0\\
0 & 0 & 0 & 0
\end{array}
\right]  |y,z\in F\right\}  .
\]
So
\[
\lbrack N_{o},N_{o}]=\left\{  n(y,z)=\left[
\begin{array}
[c]{cccc}%
1 & 0 & y & z\\
0 & 1 & 0 & y\\
0 & 0 & 1 & 0\\
0 & 0 & 0 & 1
\end{array}
\right]  |y,z\in F\right\}  .
\]
If $g\in G$ and if $g_{1},g_{2}$ are the last two columns of $g^{-1}$ then%
\[
a(g)^{\rho_{P_{o}}}=\left\Vert g_{2}\right\Vert ^{\dim_{\mathbb{R}}%
F}\left\Vert g_{1}\wedge g_{2}\right\Vert ^{\dim_{\mathbb{R}}F}
\]
so if $n(y,z)\in\lbrack N,N]$ then%
\[
a(n)^{\rho}=(1+|y|^{2}+|z|^{2})^{\frac{\dim_{\mathbb{R}}F}{2}}((1+\left\vert
y\right\vert ^{2})^{2}+|z|^{2})^{\frac{\dim_{\mathbb{R}}F}{2}}
\]
Observing that%
\[
(1+|y|^{2}+|z|^{2})\geq(1+\left\vert y\right\vert ^{2})^{\frac{1}{3}%
}(1+\left\vert z\right\vert ^{2})^{\frac{2}{3}}
\]
and%
\[
((1+\left\vert y\right\vert ^{2})^{2}+|z|^{2})\geq(1+\left\vert y\right\vert
^{2})^{2\frac{1}{2}}(1+\left\vert z\right\vert ^{2})^{\frac{1}{2}}.
\]
So
\[
a(n)^{\rho}\geq((1+\left\vert y\right\vert ^{2})^{\frac{4}{3}}(1+\left\vert
z\right\vert ^{2})^{\frac{7}{6}})^{\frac{\dim_{\mathbb{R}}F}{2}}
\]
From this it is easy to see that%
\[
\int_{\lbrack N_{o},N_{o}]}a(n)^{\rho}(1+\rho(\log a(n)))^{r}dn<\infty
\]
for all $r\in\mathbb{R}$ .

\section{$SO(n,2)$}

In this section $G=SO(n+2,2)$ which we realize as follows: Let%
\[
L=\left[
\begin{array}
[c]{cc}%
0 & 1\\
1 & 0
\end{array}
\right]
\]
and%
\[
H=\left[
\begin{array}
[c]{ccc}%
0 & 0 & L\\
0 & I & 0\\
L & 0 & 0
\end{array}
\right]
\]
where $I$ denotes the $n\times n$ identity matrix and $G$ is the group of all
elements of $SL(n+4,\mathbb{R})$ such that
\[
gHg^{T}=H.
\]
Let $K=G\cap SO(n+4)$ then $K$ is isomorphic with $S(SO(2)\times SO(n+2))$.
$\mathfrak{n=}Lie(N)$ is the group of elements of $M_{n+4}(\mathbb{R})$ of the
form%
\[
\left[
\begin{array}
[c]{ccc}%
\begin{array}
[c]{cc}%
0 & x\\
0 & 0
\end{array}
& Y &
\begin{array}
[c]{cc}%
z & \\
& -z
\end{array}
\\
0 & 0 & -Y^{T}L\\
0 & 0 &
\begin{array}
[c]{cc}%
0 & -x\\
0 & 0
\end{array}
\end{array}
\right]
\]
with $Y$ of size $2\times n.$ One checks that $[\mathfrak{n},\mathfrak{n}]$ is
the space of matrices%
\[
X=\left[
\begin{array}
[c]{ccccc}%
0 & 0 & y & z & 0\\
0 & 0 & 0 & 0 & -z\\
0 & 0 & 0 & 0 & -y^{T}\\
0 & 0 & 0 & 0 & 0\\
0 & 0 & 0 & 0 & 0
\end{array}
\right]
\]
with $y$ of size $1\times n$. With this notation we have%
\[
\exp(X)=\left[
\begin{array}
[c]{ccccc}%
1 & 0 & y & z & -\frac{\left\Vert y\right\Vert ^{2}}{2}\\
0 & 1 & 0 & 0 & -z\\
0 & 0 & I & 0 & -y^{T}\\
0 & 0 & 0 & 1 & 0\\
0 & 0 & 0 & 0 & 1
\end{array}
\right]
\]
and one calculates

\begin{lemma}
With $X$ as above%
\[
a(\exp(X))^{2\rho}=(\left(  1+\frac{\left\Vert y\right\Vert ^{2}}{2}\right)
^{2}+z^{2})(1+z^{2}+\frac{\left\Vert y\right\Vert ^{2}}{2})^{n}.
\]
Thus,%
\[
a(\exp(X))^{-\rho}\leq(1+\frac{\left\Vert y\right\Vert ^{2}}{2})^{-\frac{n}%
{2}-\frac{1}{12}}(1+z^{2})^{-\frac{1}{2}-\frac{1}{12}}.
\]

\end{lemma}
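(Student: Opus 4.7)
The plan is to compute $a(\exp X)^{2\rho}$ via the same matrix-coefficient strategy used in the $Sp_4$ section, and then squeeze out the pointwise bound by two applications of Lemma \ref{inequalities}. Writing $a=\mathrm{diag}(m_1,m_2,1,\ldots,1,m_2^{-1},m_1^{-1})$, the weights of $\mathfrak{a}$ on $\mathfrak{n}$ are $e_1-e_2$ (mult.\ $1$), $e_1$ (mult.\ $n$), $e_2$ (mult.\ $n$) and $e_1+e_2$ (mult.\ $1$), so $2\rho=(n+2)e_1+ne_2$ and $a^{2\rho}=m_1^{2}(m_1m_2)^{n}$. The last two basis vectors $v_{n+3},v_{n+4}$ of $\mathbb{R}^{n+4}$ are lowest weight vectors (of weights $-e_2,-e_1$), and $v_{n+3}\wedge v_{n+4}$ is the lowest weight vector of $\wedge^{2}\mathbb{R}^{n+4}$. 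Since $\bar N$ fixes each lowest weight line and $K$ acts by isometries, for $g=\bar n a k$ one has $g^{-1}v_{n+4}=m_1 k^{-1}v_{n+4}$ and $g^{-1}(v_{n+3}\wedge v_{n+4})=m_1m_2\,k^{-1}(v_{n+3}\wedge v_{n+4})$, giving
\[
a(g)^{2\rho}=\|g^{-1}v_{n+4}\|^{2}\,\|g^{-1}(v_{n+3}\wedge v_{n+4})\|^{n}.
\]

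Next the two norms are computed explicitly for $g=\exp X$. Because $X^{3}=0$ with $X^{2}=-\|y\|^{2}E_{1,n+4}$, one has $g^{-1}=I-X+\tfrac{1}{2}X^{2}$, and its last two columns are $g^{-1}v_{n+3}=(-z,0,0,\ldots,0,1,0)^{T}$ and $g^{-1}v_{n+4}=(-\|y\|^{2}/2,z,y^{T},0,1)^{T}$. One reads off $\|g^{-1}v_{n+4}\|^{2}=(1+\|y\|^{2}/2)^{2}+z^{2}$, $\|g^{-1}v_{n+3}\|^{2}=1+z^{2}$ and $\langle g^{-1}v_{n+3},g^{-1}v_{n+4}\rangle=z\|y\|^{2}/2$. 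Lagrange's identity $\|u\wedge v\|^{2}=\|u\|^{2}\|v\|^{2}-\langle u,v\rangle^{2}$ and a short expansion give $\|g^{-1}(v_{n+3}\wedge v_{n+4})\|^{2}=(1+z^{2}+\|y\|^{2}/2)^{2}$, and substituting yields the stated identity for $a(\exp X)^{2\rho}$.

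For the inequality I apply Lemma \ref{inequalities} twice with carefully matched exponents. Rewriting $(1+\|y\|^{2}/2)^{2}+z^{2}=1+(\|y\|^{2}+\|y\|^{4}/4)+z^{2}$ and using the lemma with $1/3+2/3=1$ gives $(1+\|y\|^{2}/2)^{2}+z^{2}\geq(1+\|y\|^{2}/2)^{2/3}(1+z^{2})^{2/3}$; applying it again with $1/2+(n-1/2)=n$ gives $(1+z^{2}+\|y\|^{2}/2)^{n}\geq(1+z^{2})^{1/2}(1+\|y\|^{2}/2)^{n-1/2}$. Multiplying and taking the reciprocal square root gives the stated bound $a(\exp X)^{-\rho}\leq(1+\|y\|^{2}/2)^{-n/2-1/12}(1+z^{2})^{-1/2-1/12}$. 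The only mild obstacle is choosing the two exponent splits: the linear system $2a_{1}+b_{2}=n+1/6$, $b_{1}+a_{2}=7/6$, $a_{1}+b_{1}=1$, $a_{2}+b_{2}=n$ with $a_{i},b_{i}\geq 0$ forces the split above and requires $n\geq 1$, which is what pins down the particular constant $1/12$ in the claim.
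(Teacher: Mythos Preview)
Your proof is correct. For the identity, your route differs from the paper's: rather than conjugating by an element of $SO(n)$ to reduce to $y=re_{1}$ and then computing all the fundamental-weight norms $a(\exp X)^{\mu_{j}}$ via the row vectors $e_{i}\exp X$ (with a case split on the parity of $n$ needed to assemble $\rho$ from the $\mu_{j}$), you observe that $2\rho=(n+2)e_{1}+ne_{2}$ factors as $2e_{1}+n(e_{1}+e_{2})$ and compute only the two norms $\|g^{-1}v_{n+4}\|$ and $\|g^{-1}(v_{n+3}\wedge v_{n+4})\|$ directly from the last two columns of $(\exp X)^{-1}$, finishing with Lagrange's identity. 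This is exactly the device the paper uses in its $Sp_{4}$ section, and here it is genuinely more economical: no $SO(n)$-reduction and no even/odd case analysis. For the inequality, your two applications of Lemma~\ref{inequalities} with exponent splits $(1/3,2/3)$ and $(1/2,\,n-1/2)$ are precisely the paper's choice $\varepsilon=1/3$, $\delta=1/(2n)$ repackaged, so that half of the argument is the same.
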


\begin{proof}
We note that if
\[
m=\left[
\begin{array}
[c]{ccccc}%
1 & 0 & 0 & 0 & 0\\
0 & 1 & 0 & 0 & 0\\
0 & 0 & U & 0 & 0\\
0 & 0 & 0 & 1 & 0\\
0 & 0 & 0 & 0 & 1
\end{array}
\right]
\]
with $U\in SO(n)$ then $m\in G$ and
\[
a(m\exp(X)m^{-1})=\left[
\begin{array}
[c]{ccccc}%
1 & 0 & yU & z & -\frac{\left\Vert y\right\Vert ^{2}}{2}\\
0 & 1 & 0 & 0 & -z\\
0 & 0 & I & 0 & -U^{T}y^{T}\\
0 & 0 & 0 & 1 & 0\\
0 & 0 & 0 & 0 & 1
\end{array}
\right]
\]
thus we may assume that
\[
\exp X=\left[
\begin{array}
[c]{ccccc}%
1 & 0 & re_{1} & z & -\frac{r^{2}}{2}\\
0 & 1 & 0 & 0 & -z\\
0 & 0 & I & 0 & -re_{1}^{T}\\
0 & 0 & 0 & 1 & 0\\
0 & 0 & 0 & 0 & 1
\end{array}
\right]
\]
with $e_{i}$ the $1\times n$ matrix with a $1$ in the $i\mathrm{th}$ position
and zeros elsewhere. If $a$ is a diagonal matrix with diagonal entries
$a_{1},...,a_{m}$ then we define $\mu_{j}(a)=a_{1}\cdots a_{j}$ for $j\leq m$.
On $\wedge^{j}\mathbb{R}^{n+4}$ we put the inner product that makes $e_{i_{1}%
}\wedge\cdots\wedge e_{i_{j}}$ for $1\leq i_{1}<...<i_{j}\leq n+4$ an
orthonormal basis then if $g\in G$%
\[
a(g)^{\mu_{j}}=\left\Vert e_{1}g\wedge\cdots\wedge e_{j}g\right\Vert .
\]
Now
\[
e_{i}\exp X=\left\{
\begin{array}
[c]{c}%
e_{1}+re_{3}+ze_{n+3}-\frac{r^{2}}{2}e_{n+4},\mathrm{if\ }i=1\\
e_{2}-ue_{n+4},\mathrm{if\ }i=2\\
e_{3}-re_{n+4},\mathrm{if\ }i=3\\
e_{i},\mathrm{if\ }i>3
\end{array}
\right.
\]
we also note that if $V_{i}$ is the span of $\{e_{j}|j\neq i\}$ then the map
$\wedge^{j}V_{i}\rightarrow\wedge^{j+1}\mathbb{R}^{n+4}$ given by $u\mapsto
u\wedge e_{i}$ is isometric if $j<n+4$. \ This implies that if $3<i<n+3$ then
\[
a(\exp X)^{\mu_{i}}=a(\exp X)^{\mu_{3}}.
\]
We leave it to the reader to show that%
\[
a(\exp X)^{2\mu_{1}}=\left(  1+\frac{r^{2}}{2}\right)  ^{2}+z^{2}
\]
and%
\[
a(\exp X)^{2\mu_{2}}=a(\exp X)^{2\mu_{3}}=(1+z^{2}+\frac{r^{2}}{2})^{2}
\]
thus%
\[
a(\exp X)^{2\mu_{j}}=(1+z^{2}+\frac{r^{2}}{2})^{2}\text{ for }3<j<n+3.
\]
Now if $n=2k$ then $\rho=\mu_{1}+...+\mu_{k+1}$ and if $n=2k+1$ then $\rho
=\mu_{1}+...+\mu_{k+1}+\frac{1}{2}\mu_{k+2}$. Thus using the observations
above we have in both cases
\[
a(\exp X)^{2\rho}=(\left(  1+\frac{r^{2}}{2}\right)  +z^{2})(1+z^{2}%
+\frac{r^{2}}{2})^{n}.
\]
The inequality is proved by observing that if $0<\varepsilon<1$ and
$0<\delta<1$ then Lemma \ref{inequalities}%
\[
(\left(  1+\frac{r^{2}}{2}\right)  ^{2}+z^{2})(1+z^{2}+\frac{r^{2}}{2}%
)^{n}\geq\left(  1+\frac{r^{2}}{2}\right)  ^{2\varepsilon+n(1-\delta)}%
(1+z^{2})^{1-\varepsilon+n\delta}.
\]
Taking $\varepsilon=\frac{1}{3}$ and $\delta=\frac{1}{2n}$ yields the inequality
\end{proof}

So $SO(n+2,2)$ is tame. Note that the identity component of $SO(4,2)$ is
locally isomorphic with $SU(2,2)$ thus $SU(2,2)$ is tame. We also note that
the identity component of $SO(3,2)$ \ is locally isomorphic with
$Sp_{4}(\mathbb{R})$ which reproves that it is tame.

\section{Harish-Chandra's convergence theorem for $SL(n,F),F=\mathbb{R}$ or
$\mathbb{C}$}

In this section we will give a proof of Harish-Chandra's convergence theorem
for $SL(n,F)$, which will serve as a template for our proof of the tameness of
$SL(n,F)$.

Here $G_{n}=SL(n,F),F=\mathbb{R}$ or $\mathbb{C},K_{n}$ is $SO(n)$ if
$F=\mathbb{R}$ or $SU(n)$ if $F=\mathbb{C}$, $N_{n}$ is the group of upper
triangular matrices in $G_{n}$ with ones on the main diagonal and $A_{n}$ is
the group of diagonal matrices in $G_{n}$ with real positive entries. $\bar
{N}_{n}=N_{n}^{T}$ and $G_{n}=\bar{N}_{n}A_{n}K_{n}$ the corresponding Iwasawa
decomposition. Write $g=\bar{n}(g)a_{n}(g)k_{n}(g)$ for $g\in G_{n}$ as usual.
If $v\in N_{n}$ then writing $v$ out in terms of its columns%
\[
v=[v_{1}v_{2}...v_{n}],v_{1}=[1,0,...,0]^{T},v_{i}=[x_{\binom{i-1}{2}%
+1},...,x_{\binom{i}{2}},1,0..,0]^{T},...
\]
Let $\Lambda_{i}$ be the basic highest weights (that is the highest weights of
the representations $\wedge^{i}\mathbb{R}^{n}$ respectively) then if $\xi
_{i}=e_{n-i+1}\wedge\cdots\wedge e_{n}$ and if $g\in SL(n,F)$ then
\[
a_{n}(g)^{\Lambda_{i}}=\left\Vert g^{-1}\xi_{i}\right\Vert
\]
so
\[
a_{n}(v^{-1})^{\Lambda_{i-1}}=\left\Vert v_{i}\wedge\cdots\wedge
v_{n}\right\Vert
\]
for $i=2,...,n$. Thus%
\[
a_{n}(v^{-1})^{2\rho}=\prod_{i=2}^{n}\left\Vert v_{i}\wedge\cdots\wedge
v_{n}\right\Vert ^{2\dim_{\mathbb{R}}F}.
\]
This easily implies

\begin{lemma}
\label{keyformula}If
\[
v=\left[
\begin{array}
[c]{cc}%
I & y\\
0 & 1
\end{array}
\right]
\]
with
\[
y=[y_{1},...,y_{n-1}]^{T}
\]
then%
\[
a_{n}(v)^{2\rho}=\prod_{i=1}^{n-1}(1+\sum_{j=1}^{i}y_{i}^{2})^{\dim
_{\mathbb{R}}F}.
\]

\end{lemma}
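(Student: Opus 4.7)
The plan is to substitute the specific $v$ in the statement directly into the identity
\[
a_{n}(v^{-1})^{2\rho}=\prod_{i=2}^{n}\|v_{i}\wedge\cdots\wedge v_{n}\|^{2\dim_{\mathbb{R}}F}
\]
established in the paragraph immediately preceding the lemma, and then to reduce each wedge by a single alternating-cancellation step. For the given $v$, the columns are $v_{j}=e_{j}$ for $1\le j\le n-1$ and $v_{n}=e_{n}+\sum_{j=1}^{n-1}y_{j}e_{j}$, so the first $n-i$ factors of $v_{i}\wedge\cdots\wedge v_{n}$ are $e_{i},\ldots,e_{n-1}$. Because the wedge is alternating, I may replace $v_{n}$ by its component orthogonal to $\mathrm{span}\{e_{i},\ldots,e_{n-1}\}$, obtaining
\[
v_{i}\wedge\cdots\wedge v_{n}=e_{i}\wedge\cdots\wedge e_{n-1}\wedge\Bigl(e_{n}+\sum_{j=1}^{i-1}y_{j}e_{j}\Bigr).
\]
The elementary wedges $e_{i}\wedge\cdots\wedge e_{n-1}\wedge e_{\ell}$ for $\ell\in\{1,\ldots,i-1\}\cup\{n\}$ form an orthonormal family in $\wedge^{n-i+1}F^{n}$, so summing squared coefficients gives $\|v_{i}\wedge\cdots\wedge v_{n}\|^{2}=1+\sum_{j=1}^{i-1}y_{j}^{2}$ (with $|y_{j}|^{2}$ replacing $y_{j}^{2}$ in the complex case). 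Substituting and setting $k=i-1$ converts the product over $i=2,\ldots,n$ into the product over $k=1,\ldots,n-1$ asserted in the statement, but with $v^{-1}$ in place of $v$ on the left.

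To pass from $a_{n}(v^{-1})^{2\rho}$ to $a_{n}(v)^{2\rho}$, I apply the same argument after replacing $v$ by $v^{-1}=\left[\begin{smallmatrix}I & -y\\ 0 & 1\end{smallmatrix}\right]$: this matrix has exactly the same block shape with $y$ replaced by $-y$, and the right-hand side involves only $y_{j}^{2}$, so the product is unchanged. This yields the claimed formula for $a_{n}(v)^{2\rho}$. There is essentially no obstacle in the argument; the only thing requiring care is bookkeeping the index shift between the label $\Lambda_{i-1}$ used in the preceding wedge identity and the range $i=2,\ldots,n$ of the product, which is exactly what produces the reindexing $k=i-1$ in the final expression.
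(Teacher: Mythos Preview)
Your proof is correct and is exactly the computation the paper has in mind: the paper offers no separate proof of this lemma, merely writing ``This easily implies'' after displaying the identity $a_{n}(v^{-1})^{2\rho}=\prod_{i=2}^{n}\|v_{i}\wedge\cdots\wedge v_{n}\|^{2\dim_{\mathbb{R}}F}$, and your argument is the natural unpacking of that implication. Your handling of the passage from $a_{n}(v^{-1})$ to $a_{n}(v)$ via $v^{-1}=\left[\begin{smallmatrix}I & -y\\ 0 & 1\end{smallmatrix}\right]$ is clean and correct.
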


Note that $N_{n}=N^{\ast}V$ with
\[
N^{\ast}=\left[
\begin{array}
[c]{cc}%
N_{n-1} & 0\\
0 & 1
\end{array}
\right]  ,V=\left[
\begin{array}
[c]{cc}%
I_{n-1} & F^{n-1}\\
0 & 1
\end{array}
\right]  .
\]
Also if $v^{\ast}\in N^{\ast}$ then
\[
v^{\ast}=\bar{v}^{\ast}a^{\ast}k^{\ast}
\]
with the constituents giving the iwasawa decomposition in $\left[
\begin{array}
[c]{cc}%
G_{n-1} & 0\\
0 & 1
\end{array}
\right]  $, Define for $v^{\ast}\in N^{\ast}$ $w(v^{\ast})\in N_{n-1}$ by%
\[
v^{\ast}=\left[
\begin{array}
[c]{cc}%
w(v^{\ast}) & 0\\
0 & 1
\end{array}
\right]
\]
Note that%
\[
a_{\bar{P}_{n}}(v^{\ast})^{\rho_{n}}=a_{\bar{P}_{n-1}}(w(v^{\ast}%
))^{\rho_{n-1}},a_{\bar{P}_{n}}(v^{\ast})^{\Lambda_{n-1}}=1.
\]
If $v^{\ast}\in N^{\ast},v_{1}\in V$ then
\[
v^{\ast}v_{1}=\bar{n}(v^{\ast})a_{n}(v^{\ast})k_{n}(v^{\ast})v_{1}=\bar
{n}(v^{\ast})a_{n}(v^{\ast})k_{n}(v^{\ast})v_{1}k_{n}(v^{\ast})^{-1}k(v^{\ast
}).
\]
If
\[
k_{n}(v^{\ast})v_{1}k_{n}(v^{\ast})^{-1}=\bar{v}a_{n}(k_{n}(v^{\ast}%
)v_{1}k(v^{\ast})^{-1})k_{n}
\]
is its Iwasawa decomposition then%
\[
v^{\ast}v_{1}=\bar{n}(v^{\ast})(a_{n}(v^{\ast})\bar{v}a_{n}(v^{\ast}%
)^{-1})a_{n}(v^{\ast})a_{n}(k_{n}(v^{\ast})v_{1}k_{n}(v^{\ast})^{-1}%
)k_{n}(v^{\ast})k
\]
So%
\[
a_{n}(v^{\ast}v_{1})=a_{n}(v^{\ast})a_{n}(k_{n}(v^{\ast})v_{1}k_{n}(v^{\ast
})^{-1}).
\]

We now show how this material gives a simple inductive proof of a slight
strengthening of Harish-Chandra's result in the case of $SL(n,F)$.

\begin{theorem}
If $n\geq2$ and $\varepsilon>0$ then%
\[
\int_{N_{n}}a_{n}(v)^{-\rho_{n}}(1+\rho(\log a_{n}(v))^{-\binom{n}%
{2}-\varepsilon}dv<\infty.
\]

\end{theorem}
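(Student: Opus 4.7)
The plan is to prove the theorem by induction on $n$. For the base case $n=2$, $N_2 \cong F$ and Lemma \ref{keyformula} gives $a_2(v)^{2\rho_2}=(1+|y|^2)^{\dim_{\mathbb{R}}F}$, so the integral reduces to
\[
\int_F(1+|y|^2)^{-\dim_{\mathbb{R}}F/2}\bigl(1+\tfrac{\dim_{\mathbb{R}}F}{2}\log(1+|y|^2)\bigr)^{-1-\varepsilon}\,dy,
\]
which converges for every $\varepsilon>0$ by a routine radial estimate.

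For the inductive step, I would first replace $v$ by $v^{-1}$, which preserves Haar measure on the unimodular group $N_n$, so that the integrand becomes $a_n(v^{-1})^{-\rho_n}(1+\log a_n(v^{-1})^{\rho_n})^{-\binom{n}{2}-\varepsilon}$. Next decompose $v=v^*v_1$ with $v^*=\mathrm{diag}(w,1)$, $w=w(v^*)\in N_{n-1}$, and $v_1\in V$ parametrised by $y\in F^{n-1}$. Then the first $n-1$ columns of $v$ are $(w_j,0)^T$ and the last is $(wy,1)^T$, so the orthogonal decomposition of $v_n$ as $wy+e_n$, substituted into $a_n(v^{-1})^{2\rho_n}=\prod_{i=2}^n\|v_i\wedge\cdots\wedge v_n\|^{2\dim_{\mathbb{R}}F}$, yields
\[
\|v_i\wedge\cdots\wedge v_n\|^2=\|W_i\|^2\bigl(1+\|\pi_i(wy)\|^2\bigr),\qquad W_i:=w_i\wedge\cdots\wedge w_{n-1},
\]
where $\pi_i$ denotes orthogonal projection in $F^{n-1}$ onto the complement of $\mathrm{span}(w_i,\ldots,w_{n-1})$.

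The crucial manoeuvre is a two-fold change of variable that decouples $v^*$ from $v_1$. Substituting $z=wy$ (Jacobian $1$ since $\det w=1$) and then expanding $z=\sum c_j f_j$ in the orthonormal basis $(f_1,\ldots,f_{n-1})$ of $F^{n-1}$ obtained by applying Gram--Schmidt to $(w_{n-1},w_{n-2},\ldots,w_1)$ makes $\|\pi_i(z)\|^2=\sum_{j<i}|c_j|^2$ \emph{independent of} $w$, while $\prod_{i=2}^{n-1}\|W_i\|^{2\dim_{\mathbb{R}}F}=a_{n-1}(w^{-1})^{2\rho_{n-1}}$ is just the column formula in one dimension lower. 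This produces the clean factorisation
\[
a_n(v^{-1})^{\rho_n}=a_{n-1}(w^{-1})^{\rho_{n-1}}\cdot a_n(u(c))^{\rho_n},
\]
where $u(c)\in V$ corresponds to $c$ via Lemma \ref{keyformula}. Applying the elementary splitting $(1+A+B)^{-s}\leq(1+A)^{-a}(1+B)^{-(s-a)}$ (valid for $A,B\geq 0$ and $0\leq a\leq s$) to the logarithm with $s=\binom{n}{2}+\varepsilon$ and $a=\binom{n-1}{2}+\varepsilon/2$, so $s-a=(n-1)+\varepsilon/2$, and then using Fubini, bounds the integral by the product of the inductive-hypothesis integral on $N_{n-1}$ and the auxiliary $V$-integral
\[
\int_V a_n(u)^{-\rho_n}\bigl(1+\log a_n(u)^{\rho_n}\bigr)^{-(n-1)-\varepsilon/2}\,du.
\]

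This auxiliary integral I would handle by a separate lemma: $\int_V a_n(u)^{-\rho_n}(1+\log a_n(u)^{\rho_n})^{-b}du<\infty$ whenever $b>n-1$. I would prove it by a parallel induction on $n$: peel off $c_{n-1}$, substitute $c_{n-1}=\sqrt{1+|c_1|^2+\cdots+|c_{n-2}|^2}\,t$ to decouple the final factor of $a_n(u)^{\rho_n}$, and once more split the logarithm by the same $(1+A+B)^{-s}$ inequality. The gap $\binom{n}{2}-\binom{n-1}{2}=n-1$ matches this threshold exactly, which is why the induction closes with room for arbitrary $\varepsilon>0$. The main obstacle is constructing the Gram--Schmidt change of variable so that the inner $V$-integral becomes uniformly independent of $v^*$; once this separation of variables is achieved, everything else is careful bookkeeping with Lemma \ref{inequalities} and the $(1+A+B)^{-s}$ splittings.
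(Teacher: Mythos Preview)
Your proposal is correct and follows the same inductive scheme as the paper: decompose $N_n=N^{\ast}V$, factor $a_n$ multiplicatively so that the $V$--integral decouples from $v^{\ast}$, split the logarithmic weight with the elementary inequality $(1+A+B)^{a+b}\ge(1+A)^{a}(1+B)^{b}$, and invoke the inductive hypothesis on $N_{n-1}$.

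Two implementation differences are worth noting. First, where you pass to $v^{-1}$, expand in wedge products, and perform the Gram--Schmidt change of variable on $z=wy$, the paper instead uses the Iwasawa identity
\[
a_n(v^{\ast}v_1)=a_n(v^{\ast})\,a_n\bigl(k_n(v^{\ast})\,v_1\,k_n(v^{\ast})^{-1}\bigr)
\]
together with the observation that conjugation by $k_n(v^{\ast})\in\mathrm{diag}(K_{n-1},1)$ preserves $V$ and its Haar measure; this is exactly your Gram--Schmidt manoeuvre phrased group--theoretically (Gram--Schmidt \emph{is} the Iwasawa decomposition), so the two decouplings are the same argument in different clothing. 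Second, for the auxiliary $V$--integral the paper avoids your proposed secondary induction: from Lemma~\ref{inequalities} one has
\[
\Bigl(1+\sum_{i=1}^{n-1}\log\bigl(1+\sum_{j\le i}|y_j|^2\bigr)\Bigr)^{n-1}\ge\prod_{i=1}^{n-1}\bigl(1+\log(1+|y_i|^2)\bigr),
\qquad
1+\sum_{j\le i}|y_j|^2\ge 1+|y_i|^2,
\]
so the $V$--integral is bounded directly by the $(n-1)$--st power of the one--dimensional base--case integral. Your peeling/substitution argument would also work, but is unnecessary.
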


\begin{proof}
We prove the result by induction on $n$. If $F=\mathbb{C}$ then we identify
$F$ with $\mathbb{R}^{2}$ and $dx$ will denote Labesgue measure. If $n=2$ then
we are looking at%
\[
\int_{F}\frac{dx}{(1+\left\vert x\right\vert ^{2})^{\frac{\dim_{\mathbb{R}}%
F}{2}}(1+\log(1+\left\vert x\right\vert ^{2}))^{1+\varepsilon}}
\]
which finite if $\varepsilon>0$ (We will give simple argument after the proof
is completed). So assume the result for $n-1$. We have%
\[
\int_{N_{n}}a_{n}(v)^{-\rho_{n}}(1+\rho_{n}(\log a(v))^{-\binom{n}%
{2}-\varepsilon}dv=\int_{N^{\ast}}a_{n-1}(w(v^{\ast}))^{-\rho_{n-1}}\times
\]%
\[
\int_{V}a_{n}(k_{n}(v^{\ast})v_{1}k_{m}(v^{\ast})^{-1})^{-\rho_{n}}%
(1+\rho_{n-1}(\log a_{n-1}(w(v^{\ast}))+\rho_{n}(\log a_{n}(k_{n}(v^{\ast
})v_{1}k_{n}(v^{\ast})^{-1}))^{^{-\binom{n}{2}-\varepsilon}}dv^{\ast}dv
\]
We note that the map $v_{1}\rightarrow gv_{1}g^{-1}$ preserves $V$ if
$g\in\left[
\begin{array}
[c]{cc}%
G_{n-1} & 0\\
0 & 1
\end{array}
\right]  $ and preserves the invariant measure on $V$. Thus the integral
becomes%
\[
\int_{N^{\ast}}a_{n-1}(w(v^{\ast}))^{-\rho_{n-1}}\int_{V}a_{n}(v_{1}%
)^{-\rho_{n}}(1+\rho_{n-1}(\log a_{n-1}(w(v^{\ast}))+\rho_{n}(\log a_{n}%
(v_{1}))^{^{-\binom{n}{2}-\varepsilon}}dv^{\ast}dv
\]%
\[
\leq\int_{N^{\ast}}a_{\bar{P}_{n-1}}(w(v^{\ast}))^{-\rho_{n-1}}(1+\rho
_{n-1}(\log a_{\bar{P}_{n-1}}(w(v^{\ast}))^{-\binom{n-1}{2}-\frac{\varepsilon
}{2}}dv^{\ast}\times
\]%
\[
\int_{V}a_{n}(v_{1})^{-\rho_{n}}(1+\rho_{n}(\log a_{n}(v_{1}))^{-n+1-\frac
{\varepsilon}{2}}dv_{1}.
\]
The first integral is covered by the inductive hypothesis. To analyze the
second integral, the above lemma implies that it is equal to
\[
\int_{F^{n-1}}(1+\sum_{i=1}^{n-1}\log(1+\sum_{j=1}^{i}|y_{i}|^{2}%
))^{-n+1-\frac{\varepsilon}{2}}\prod_{i=1}^{n-1}(1+\sum_{j=1}^{i}|y_{i}%
|^{2})^{-\frac{\dim_{\mathbb{R}}F}{2}}dy.
\]
since we can write
\[
v=\left[
\begin{array}
[c]{cc}%
I_{n-1} & y^{T}\\
0 & 1
\end{array}
\right]  ,y=[y_{1}y_{2}...y_{n-1}].
\]
This integral converges since it is less than or equal to%
\[
\left(  \int_{F}\frac{dx}{(1+|x|^{2})^{\frac{\dim_{\mathbb{R}}F}{2}}%
(1+\log(1+|x\}^{2}))^{1+\frac{\varepsilon}{2(n-1)}}}\right)  ^{n-1}<\infty.
\]

\end{proof}

\begin{lemma}
If $\varepsilon>0$ then
\[
\int_{F}\frac{dx}{(1+|x|^{2})^{\frac{\dim_{\mathbb{R}}F}{2}}(1+\log
(1+|x|^{2}))^{1+\varepsilon}}<\infty.
\]

\end{lemma}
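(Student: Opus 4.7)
The plan is a direct calculus exercise: pass to radial coordinates on $F$, viewed as $\mathbb{R}^{d}$ with $d=\dim_{\mathbb{R}} F\in\{1,2\}$, and reduce to the classical fact that $\int_{1}^{\infty}u^{-(1+\varepsilon)}du<\infty$ via the substitution $u=1+\log(1+r^{2})$.

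First I would split the domain into $\{|x|\le 1\}$ and $\{|x|>1\}$. On the bounded piece the integrand is continuous, so finiteness is trivial. On the unbounded piece, after passing to radial coordinates and absorbing the (finite) angular integral into a constant, the problem reduces to the finiteness of
\[
I_{d}=\int_{1}^{\infty}\frac{r^{d-1}\,dr}{(1+r^{2})^{d/2}(1+\log(1+r^{2}))^{1+\varepsilon}}.
\]
The natural substitution is $u=1+\log(1+r^{2})$, so $du=\frac{2r}{1+r^{2}}\,dr$.

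For $d=2$ the substitution is exact: $\frac{r\,dr}{1+r^{2}}=\frac{1}{2}du$, and $I_{2}$ becomes a constant multiple of $\int u^{-(1+\varepsilon)}du$, which converges since $\varepsilon>0$. For $d=1$ the integrand is $\frac{dr}{(1+r^{2})^{1/2}(1+\log(1+r^{2}))^{1+\varepsilon}}$, and for $r\ge 1$ I would use the elementary estimate $(1+r^{2})^{1/2}\ge r$ and $\frac{1+r^{2}}{r^{2}}\le 2$ to bound
\[
\frac{dr}{(1+r^{2})^{1/2}}\le \frac{dr}{r}=\frac{1+r^{2}}{2r^{2}}\cdot\frac{2r\,dr}{1+r^{2}}\le du,
\]
so $I_{1}$ is bounded by $\int u^{-(1+\varepsilon)}du$ as well.

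There is no real obstacle here; the only thing to notice is that the exponent $d/2$ on $(1+|x|^{2})$ is exactly the one needed to make $r^{d-1}(1+r^{2})^{-d/2}\,dr$ (or a bound on it) coincide, up to a bounded factor on $r\ge 1$, with the logarithmic differential $du=\frac{2r}{1+r^{2}}dr$. That matching is the whole point of the statement, and once it is spotted the computation is one line.
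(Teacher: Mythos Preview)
Your argument is correct and follows essentially the same route as the paper: pass to radial coordinates, split off a compact piece where the integrand is bounded, and on the tail reduce to the convergence of $\int u^{-(1+\varepsilon)}\,du$ via a logarithmic substitution. The paper splits at $r=2$ and first replaces $(1+r^{2})$ and $1+\log(1+r^{2})$ by $r^{2}$ and $\log r$ before substituting $s=\log r$, whereas you substitute $u=1+\log(1+r^{2})$ directly; these are cosmetic differences only.
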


\begin{proof}
Integrating in polar coordinates the integral is%
\[
2\pi^{\dim_{\mathbb{R}}F-1}\int_{0}^{\infty}\frac{r^{\dim_{\mathbb{R}}F-1}%
dr}{(1+r^{2})^{\frac{\dim_{\mathbb{R}}F}{2}}(1+\log(1+r^{2}))^{1+\varepsilon}%
}.
\]
Which we can write as%
\[
2\pi^{\dim_{\mathbb{R}}F-1}\int_{0}^{2}r^{\dim_{\mathbb{R}}F-1}\frac
{dr}{(1+r^{2})^{\frac{\dim_{\mathbb{R}}F}{2}}(1+\log(1+r^{2}))^{1+\varepsilon
}}
\]%
\[
+2\pi^{\dim_{\mathbb{R}}F-1}\int_{2}^{\infty}r^{\dim_{\mathbb{R}}F-1}\frac
{dr}{(1+r^{2})^{\frac{\dim_{\mathbb{R}}F}{2}}(1+\log(1+r^{2}))^{1+\varepsilon
}}.
\]
The first integral is clearly finite. As for the second it is less than or
equal to a constant times%
\[
\int_{2}^{\infty}\frac{dr}{r\log(r)^{1+\varepsilon}}=\int_{\log2}^{\infty
}\frac{dr}{r^{1+\varepsilon}}<\infty.
\]

\end{proof}

\section{$SL(n,F),F=\mathbb{R}$ or $\mathbb{C}$}

We retain the notation of the previous section. That the group $SL(n,F)$ is
tame is an immediate consequence of the following result the proof of which
will be the topic of the rest of this section. The extra factor is necessary
in its inductive proof.

\begin{theorem}
If $n\geq3$ if $0\leq\alpha<1$ then%
\[
\int_{\lbrack N_{n},N_{n}]}a_{n}(v)^{-\rho_{n}}a_{n}(v)^{\alpha\dim
_{\mathbb{R}}F\Lambda_{n-1}}(1+\rho_{n}(\log(a_{n}(v)))^{r}dv<\infty
\]
for all $r$.
\end{theorem}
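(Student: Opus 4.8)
The plan is to imitate the inductive structure of the Harish-Chandra argument from the previous section, but now integrating only over $[N_n,N_n]$ and carrying the extra factor $a_n(v)^{\alpha\dim_{\mathbb R}F\,\Lambda_{n-1}}$ so that the induction closes. First I would set up the decomposition $[N_n,N_n]=[N^\ast,N^\ast]\,V_0$, where $V_0$ is the part of $V$ lying in $[N_n,N_n]$: writing $v=\bigl[\begin{smallmatrix}I_{n-1}&y^T\\0&1\end{smallmatrix}\bigr]$ with $y=[y_1,\dots,y_{n-1}]$, the commutator condition forces $y_1=0$, so the $V$-part contributes $n-2$ real (or complex) parameters $y_2,\dots,y_{n-1}$. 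Using $a_n(v^\ast v_1)=a_n(v^\ast)a_n(k_n(v^\ast)v_1k_n(v^\ast)^{-1})$ and the conjugation-invariance of $V$ and of its Haar measure under $\bigl[\begin{smallmatrix}G_{n-1}&0\\0&1\end{smallmatrix}\bigr]$, exactly as in the theorem above, the integral splits as an outer integral over $[N^\ast,N^\ast]\cong[N_{n-1},N_{n-1}]$ of $a_{n-1}(w(v^\ast))^{-\rho_{n-1}}$ times an inner integral over $V_0$ of $a_n(v_1)^{-\rho_n}a_n(v_1)^{\alpha\dim_{\mathbb R}F\,\Lambda_{n-1}}$, with the log factors recombined via Lemma \ref{inequalities}.

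Next I would evaluate the inner $V_0$-integral explicitly. By Lemma \ref{keyformula}, for $v_1\in V$ with parameter $y$ we have $a_n(v_1)^{2\rho_n}=\prod_{i=1}^{n-1}(1+\sum_{j\le i}|y_j|^2)^{\dim_{\mathbb R}F}$ and $a_n(v_1)^{2\Lambda_{n-1}}=1+\sum_{j=1}^{n-1}|y_j|^2$; restricting to $y_1=0$ and relabeling, the inner integral becomes
\[
\int_{F^{n-2}}\Bigl(1+\textstyle\sum_{i}|y_i|^2\Bigr)^{\frac{\alpha\dim_{\mathbb R}F}{2}}\prod_{i=2}^{n-1}\Bigl(1+\textstyle\sum_{j=2}^{i}|y_j|^2\Bigr)^{-\frac{\dim_{\mathbb R}F}{2}}\bigl(1+\cdots\log\cdots\bigr)^{\,?}\,dy.
\]
The point is that the first factor only has exponent $\frac{\alpha\dim_{\mathbb R}F}{2}$ with $\alpha<1$, so it is dominated by $\prod_i (1+|y_i|^2)^{\frac{\alpha\dim_{\mathbb R}F}{2}}$, and after pairing it against one of the $\prod_{i}(1+\sum_{j\le i}|y_j|^2)^{-\frac{\dim_{\mathbb R}F}{2}}$ factors (the last one, which dominates $(1+\sum_i|y_i|^2)^{-\frac{\dim_{\mathbb R}F}{2}}$) one is left with an integrand whose worst coordinate behaves like $(1+|y_i|^2)^{-(1-\alpha)\frac{\dim_{\mathbb R}F}{2}}$ in a $\dim_{\mathbb R}F$-dimensional variable — which is not by itself integrable. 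So I would instead spread the surplus: split each $(1+\sum_{j\le i}|y_j|^2)^{-\frac{\dim_{\mathbb R}F}{2}}$ using Lemma \ref{inequalities} into a product $\prod_{j\le i}(1+|y_j|^2)^{-c_j}$ with $\sum_{j\le i}c_j=\frac{\dim_{\mathbb R}F}{2}$, and choose the weights so that each individual $y_i$ ends up with total exponent strictly exceeding $\frac{\dim_{\mathbb R}F}{2}$, exactly as was done for $GL(4,\mathbb R)$ and for $Sp_4$ in the earlier sections (the $\frac{7}{6}$ trick). Since there are $n-1\ge 2$ factors $\|v_i\wedge\cdots\wedge v_n\|$ available but only $n-2$ free coordinates after imposing $y_1=0$, there is genuine slack, and the bookkeeping of the weights is the one calculation to be done carefully.

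Then the log factor is handled as before: by Lemma \ref{inequalities}, $(1+\rho_n(\log a_n(v_1)))^r$ and the cross terms with $\rho_{n-1}(\log a_{n-1}(w(v^\ast)))$ are each bounded by a product of $(1+\log(1+|y_i|^2))^{r'}$-type factors, which are absorbed harmlessly since each coordinate has power-margin to spare; concretely the one-variable estimate $\int_F (1+|x|^2)^{-\frac{\dim_{\mathbb R}F}{2}-\delta}(1+\log(1+|x|^2))^{r'}\,dx<\infty$ (proved just as in the last Lemma of the preceding section) finishes each coordinate. The outer integral over $[N^\ast,N^\ast]$ is then exactly the statement of the theorem for $n-1$ — here one needs the induction hypothesis with its own $\alpha$, which can be taken to be the same $\alpha$ (or $0$), because the factor $a_{\bar P_n}(v^\ast)^{\Lambda_{n-1}}=1$ means passing from $N_n$ down to $N^\ast$ costs nothing on the $\Lambda_{n-1}$-side, and what reappears for $G_{n-1}$ is an $a_{n-1}(w(v^\ast))^{\beta\dim_{\mathbb R}F\,\Lambda_{n-2}}$ factor with some $\beta<1$ coming from redistributing exponents. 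I expect the main obstacle to be precisely arranging the induction so that the exponent $\alpha$ (or whatever parameter one carries) stays in $[0,1)$ at every stage while leaving enough slack to absorb both the $\Lambda_{n-1}$-factor and all the logarithms; the base case $n=3$, where $[N_3,N_3]$ is one-dimensional and the integrand is $(1+|z|^2)^{-\dim_{\mathbb R}F+\alpha\dim_{\mathbb R}F/?}(1+\log)^{r}$, should be immediate from the one-variable lemma once the weights are pinned down.
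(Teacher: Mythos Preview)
There is a genuine gap in the first paragraph. You assert that after writing $a_n(v^\ast v_1)=a_n(v^\ast)\,a_n(k_n(v^\ast)v_1k_n(v^\ast)^{-1})$ and using ``conjugation-invariance of $V$ and of its Haar measure,'' the inner integral becomes simply $\int_{V_0}a_n(v_1)^{-\rho_n+\alpha\dim_{\mathbb R}F\,\Lambda_{n-1}}(\cdots)\,dv_1$, independent of $v^\ast$. This is false: conjugation by $k_n(v^\ast)\in\bigl[\begin{smallmatrix}K_{n-1}&0\\0&1\end{smallmatrix}\bigr]$ preserves $V$ and its Haar measure, but it does \emph{not} preserve the hyperplane $V_0=V\cap[N_n,N_n]$. (Incidentally the commutator condition is $y_{n-1}=0$, not $y_1=0$: the entry on the first superdiagonal is the $(n-1,n)$ entry.) Over the full $V$, as in the Harish-Chandra proof, the change of variables kills the $k(v^\ast)$; over $V_0$ it cannot, and the inner integral genuinely depends on $v^\ast$. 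If your decoupling were correct the outer integral would be just $\int_{[N_{n-1},N_{n-1}]}a_{n-1}(w)^{-\rho_{n-1}}(1+\log)^r\,dw$ and the parameter $\alpha$ in the statement would be pointless.

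What the paper actually does is compute this dependence. Writing $k_{n-1}(w(v^\ast))=\bigl[\begin{smallmatrix}A&b\\c&d\end{smallmatrix}\bigr]$, the conjugate $k(v^\ast)v_1k(v^\ast)^{-1}$ has column vector $\bigl[\begin{smallmatrix}Ay'\\cy'\end{smallmatrix}\bigr]$; one changes variables $Ay'\to y'$ in $F^{n-2}$, picking up a Jacobian $|\det A|^{-\dim_{\mathbb R}F}=a_{n-1}(w(v^\ast))^{\dim_{\mathbb R}F\,\Lambda_{n-2}}$ (Lemma~\ref{detformula}), and then applies the appendix estimate Lemma~\ref{mainLemma} to the resulting integral, which depends on $u=cA^{-1}$ with $1+\|u\|^2=a_{n-1}(w(v^\ast))^{2\Lambda_{n-2}}$ (Lemma~\ref{simple}). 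The Jacobian and Lemma~\ref{mainLemma} together leave a net factor $a_{n-1}(w(v^\ast))^{\beta\dim_{\mathbb R}F\,\Lambda_{n-2}}$ with $\beta=1-(1-\varepsilon)(1-\alpha-\delta)\in(0,1)$, and \emph{that} is what the inductive hypothesis with the $\alpha$-parameter is designed to absorb. Your ``$\tfrac{7}{6}$-type'' redistribution inside the inner integral alone cannot produce this coupling to $v^\ast$; the missing ingredient is Lemma~\ref{mainLemma}.
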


We first note that
\[
\lbrack N,N]\cap N^{\ast}=[N^{\ast},N^{\ast}]=\left[
\begin{array}
[c]{cc}%
\lbrack N_{n-1},N_{n-1}] & 0\\
0 & 1
\end{array}
\right]
\]
Also, if%
\[
v_{1}\in\lbrack N,N]\cap V
\]
then%
\[
v_{1}=\left[
\begin{array}
[c]{cc}%
I_{n-1} & y^{T}\\
0 & 1
\end{array}
\right]  ,y=[y_{1},...,y_{n-2,}0,1].
\]
Note that the map
\[
\left(  \lbrack N,N]\cap N^{\ast}\right)  \times([N,N]\cap V)\rightarrow
\lbrack N,N]
\]
given by multiplication defines a measure preserving diffeomorphism if the
Haar measures on the three groups are properly normalized.

If $v^{\ast}\in N^{\ast}$ with
\[
v^{\ast}=\left[
\begin{array}
[c]{cc}%
w(v^{\ast}) & 0\\
0 & 1
\end{array}
\right]  ,w(v^{\ast})\in N_{n-1}
\]
then
\[
k_{n}(v^{\ast})=\left[
\begin{array}
[c]{cc}%
k_{n-1}(w(v^{\ast})) & 0\\
0 & 1
\end{array}
\right]  .
\]
So we have%
\[
k_{n}(v^{\ast})v_{1}k_{n}(v^{\ast})^{-1}=\left[
\begin{array}
[c]{cc}%
I_{n-1} & k_{n-1}(w(v^{\ast}))y^{T}\\
0 & 1
\end{array}
\right]
\]
Set $V_{1}=V\cap\lbrack N,N],$as in the proof of Harish-Chandra's convergence
theorem we have
\[
\int_{\lbrack N_{n},N_{n}]}a_{n}(v)^{-\rho_{n}}a_{n}(v)^{\alpha\dim
_{\mathbb{R}}F\Lambda_{n-1}}(1+\rho(\log(a_{n}(v)))^{r}dv=
\]%
\[
\int_{N^{\ast}\cap\lbrack N_{n},N_{n}]}a_{n-1}(w(v^{\ast}))^{-\rho_{n-1}}%
\int_{V_{1}}a_{n}(k(v^{\ast})vk(v^{\ast})^{-1})^{-(\rho_{n}-\alpha
\dim_{\mathbb{R}}F\Lambda_{n-1})}\times
\]%
\[
(1+\rho_{n-1}(\log a_{n-1}(w(v^{\ast})))+\rho_{n}(\log a_{n}(k(v^{\ast
})vk(v^{\ast})^{-1}))^{r}dv^{\ast}dv\leq
\]%
\[
\int_{N^{\ast}}a_{n-1}(w(v^{\ast}))^{-\rho_{n-1}}(1+\rho_{n-1}(\log
a_{n}(w(v^{\ast})))^{r}\times
\]%
\[
\int_{V_{1}}a_{n}(k(v^{\ast})vk(v^{\ast})^{-1})^{-(\rho_{n}-\alpha
\dim_{\mathbb{R}}F\Lambda_{n-1})}(1+\rho_{n}(\log a_{n}(k(v^{\ast}%
)v_{1}k(v^{\ast})^{-1}))^{r}dv_{1}dv^{\ast}.
\]
To continue the argument we must analyze the following integral with $v^{\ast
}\in\lbrack N^{\ast},N^{\ast}]$ fixed%
\[
\int_{V_{1}}a_{n}(k(v^{\ast})v_{1}k(v^{\ast})^{-1})^{-(\rho_{n}-\alpha
\dim_{\mathbb{R}}F\Lambda_{n-1})}(1+\rho_{n}(\log a_{n}(k(v^{\ast}%
)v_{1}k(v^{\ast})^{-1}))^{r}dv_{1}
\]
with $v^{\ast}\in\lbrack N^{\ast},N^{\ast}]$ fixed. We note that if
\[
u=\left[
\begin{array}
[c]{cc}%
I & u^{\prime}\\
0 & 1
\end{array}
\right]
\]
with%
\[
u/=[u_{1},u_{2},...,u_{n-1}]
\]
then
\[
\rho_{n}(\log(a_{n}(u))=\frac{\dim_{\mathbb{R}}F}{2}\sum_{i=1}^{n-1}%
\log(1+\sum_{j=1}^{i}|u_{j}|^{2})\leq\frac{\dim_{\mathbb{R}}F}{2}%
(n-1)\log(1+\sum_{i=1}^{n-1}\left\vert u_{i}\right\vert ^{2}).
\]
We write%
\[
k_{n-1}(w(v^{\ast}))=\left[
\begin{array}
[c]{cc}%
A & b\\
c & d
\end{array}
\right]
\]
as above. Noting that if $v^{\ast}\in N^{\ast}$ then Lemma \ref{detformula}
implies that, since the \textquotedblleft$L$\textquotedblright\ in the lemma
is the identity,
\[
\left\vert \det(A)\right\vert =a_{n-1}(w(v^{\ast}))^{-\Lambda_{n-2}}.
\]
Writing $Ay=[\left(  Ay\right)  _{1}(Ay)_{2}...(Ay)_{n-2}]^{T}$ for
$y=[y_{1}...y_{n-2}]^{T}$ and%
\[
v_{1}=\left[
\begin{array}
[c]{cc}%
I &
\begin{array}
[c]{c}%
y^{\prime}\\
0
\end{array}
\\
0 & 1
\end{array}
\right]
\]%
\[
\int_{V_{1}}a_{n}(k(v^{\ast})v_{1}k(v^{\ast})^{-1})^{-(\rho_{n}-\alpha
\dim_{\mathbb{R}}F\Lambda_{n-1})}(1+\rho_{n}(\log a_{n}(k(v^{\ast}%
)v_{1}k(v^{\ast})^{-1}))^{r}dv_{1}\leq
\]%
\[
(n-1)^{r}\int_{F^{n-2}}\frac{(1+\log(1+\sum_{j=1}^{n-2}\left\vert (Ay^{\prime
})_{j}\right\vert ^{2}+\left\vert cy^{\prime}\right\vert ^{2})^{r}%
(1+\sum_{j=1}^{n-2}\left\vert (Ay^{\prime})_{j}\right\vert ^{2}+\left\vert
cy^{\prime}\right\vert ^{2})^{\dim_{\mathbb{R}}F\frac{\alpha}{2}}dy^{\prime}%
}{(1+\sum_{j=1}^{n-2}\left\vert (Ay^{\prime})_{j}\right\vert ^{2}+\left\vert
cy^{\prime}\right\vert ^{2})^{\frac{\dim_{\mathbb{R}}F}{2}}\prod_{i=1}%
^{n-2}(1+\sum_{j=1}^{i}\left\vert (Ay^{\prime})_{j}\right\vert )^{\frac
{\dim_{\mathbb{R}}F}{2}}}.
\]
After the change of variables $Ay^{\prime}\rightarrow y^{\prime}$ the integral
becomes%
\[
(n-1)^{r}\left\vert \det A\right\vert ^{-\dim_{\mathbb{R}}F}\times
\]%
\[
\int_{F^{n-2}}\frac{(1+\log(1+\sum_{j=1}^{n-2}\left\vert y_{j}\right\vert
^{2}+|cA^{-1}y^{\prime}|^{2}))^{r}(1+\sum_{j=1}^{n-2}\left\vert y_{j}%
\right\vert ^{2}+|cA^{-1}y^{\prime}|^{2})^{\dim_{\mathbb{R}}F\frac{\alpha}{2}%
}dy^{\prime}}{(1+\sum_{j=1}^{n-2}\left\vert y_{j}\right\vert ^{2}%
+|cA^{-1}y^{\prime}|^{2})^{\frac{\dim_{\mathbb{R}}F}{2}}\prod_{i=1}%
^{n-2}(1+\sum_{j=1}^{n-2}\left\vert y_{j}\right\vert ^{2})^{\frac
{\dim_{\mathbb{R}}F}{2}}}.
\]
Let $0<\delta<1-\alpha$. There exists $B_{\delta,r}$ such that%
\[
(1+\log(1+\sum_{j=1}^{n-2}y_{j}^{2}+(cA^{-1}y^{\prime})^{2}))^{r}\leq
B_{\delta,r}(1+\sum_{j=1}^{i}y_{j}^{2}+(cA^{-1}y^{\prime})^{2})^{\frac{\delta
}{2}}.
\]
So if we set $\bar{u}=cA^{-1}\in\mathbb{C}^{n-2}$ then since $0<a+\delta<1$ we
see that Lemma \ref{mainLemma} implies that%
\[
\int_{V_{1}}a_{\bar{P}_{n}}(k(v^{\ast})vk(v^{\ast})^{-1})^{-(\rho_{n}%
-\alpha\dim_{\mathbb{R}}F\Lambda_{n})}(1+\rho_{n}(\log a_{\bar{P}_{n}%
}(k(v^{\ast})v_{1}k(v^{\ast})^{-1}))^{r}dv_{1}\leq
\]%
\[
(n-1)^{r}\left\vert \det A\right\vert ^{-\dim_{\mathbb{R}}F}B_{\delta,r}%
\int_{\mathbb{R}^{n-2}}\varphi_{1-\alpha-\delta,u}(x)^{-\frac{\dim
_{\mathbb{R}}F}{2}}dx\leq
\]%
\[
(n-1)^{r}\left\vert \det A\right\vert ^{-\dim_{\mathbb{R}}F}B_{\delta
,r}C_{\varepsilon,\alpha,m}(1+\left\Vert u\right\Vert ^{2})^{-\frac
{(1-\varepsilon)(1-\alpha-\delta)}{2}\dim_{\mathbb{R}}F}
\]
for all $0<\varepsilon<1$. Lemma \ref{simple} implies that
\[
cA^{-1}==-\frac{b^{\ast}}{\overline{d}}.
\]
So%
\[
\left\Vert u\right\Vert ^{2}=\frac{1-|d|^{2}}{|d|^{2}}.
\]
And we have seen that if $w(n^{\ast})^{-1}$ has last column $[x_{1}%
...x_{n-3}01]^{T}$ then
\[
\left\vert d\right\vert ^{2}=\frac{1}{1+\sum_{i=1}^{n-3}|x_{i}|^{2}}
\]
Thus
\[
\left\Vert u\right\Vert ^{2}=\sum_{i=1}^{n-3}\left\vert x_{i}\right\vert
^{2}.
\]
Hence%
\[
1+\left\Vert u\right\Vert ^{2}=a_{n-1}(w(n^{\ast}))^{2\Lambda_{n-2}}.
\]
Also $\left\vert \det A\right\vert =\left\vert d\right\vert ^{-1}$ so%
\[
\int_{V_{1}}a_{n}(k(v^{\ast})v_{1}k(v^{\ast})^{-1})^{-(\rho_{n}-\alpha
\dim_{\mathbb{R}}F\Lambda_{n-1})}(1+\rho_{n}(\log a_{\bar{P}_{n}}(k(v^{\ast
})v_{1}k(v^{\ast})^{-1}))^{r}dv_{1}\leq
\]%
\[
(n-1)^{r}B_{\delta,r}C_{\varepsilon,\alpha,m}a_{n-1}(w(n^{\ast}%
))^{(1-(1-\varepsilon)(1-\alpha-\delta))\dim_{\mathbb{R}}F\Lambda_{n-2}}.
\]
Hence%
\[
\int_{\lbrack N_{n},N_{n}]}a_{n}(v)^{-\rho_{n}}a_{n}(v)^{\alpha\dim
_{\mathbb{R}}F\Lambda_{n-1}}(1+\rho_{n}(\log(a_{n}(v)))^{r}dv\leq
\]%
\[
(n-1)^{r}B_{\delta,r}C_{\varepsilon,\alpha,m}\int_{[N_{n-1},N_{n-1}]}%
a_{n-1}(z)^{-\rho_{n-1}}\times
\]%
\[
a_{n-1}(z)^{\left(  1-(1-\varepsilon)(1-\alpha-\delta))\right)  \dim
_{\mathbb{R}}F\Lambda_{n-2}}\log(1+\rho_{n-1}(\log(a_{n-1}(z)))^{r}dz
\]
which is finite by the inductive hypothesis, since $0<(1-\varepsilon
)(1-\alpha-\delta)<1$.

\section{Appendices}

In the following appendices $F$ denotes either $\mathbb{R}$ or $\mathbb{C}$.

\subsection{Some linear algebra}

The purpose of this appendix is to point about two simple linear algebra lemmas.

\begin{lemma}
\label{detformula}If $g\in SL(n,F)$ with
\[
g=\left[
\begin{array}
[c]{cc}%
L & u\\
v & w
\end{array}
\right]  ,
\]
if the last column of $g^{-1}$ is
\[
x=[x_{1},...,x_{n}]^{T},
\]
and if
\[
k_{n}(g)=\left[
\begin{array}
[c]{cc}%
A & b\\
c & d
\end{array}
\right]
\]
with $A,L$ matrices of size $(n-1)\times(n-1)$ then%
\[
\det A=\frac{\det L}{\sqrt{\sum_{i=1}^{n}\left\vert x_{i}\right\vert ^{2}}%
}=\left(  \det L\right)  a_{\bar{P}_{n}}(g)^{-\Lambda_{n-1}}.
\]

\end{lemma}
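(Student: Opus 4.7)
The plan is to read off both sides of the claimed identity directly from the Iwasawa decomposition $g=\bar n(g)\,a(g)\,k(g)$, exploiting the fact that $\bar n(g)$ is lower triangular unipotent and so fixes the last standard basis vector $e_n$.

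First I would identify $a(g)^{\Lambda_{n-1}}$ with $\|x\|$. Writing $g^{-1}=k(g)^{-1}a(g)^{-1}\bar n(g)^{-1}$ and applying to $e_n$, the lower-triangular unipotent $\bar n(g)^{-1}$ fixes $e_n$, and if $a(g)=\operatorname{diag}(a_1,\dots,a_n)$ then $a(g)^{-1}e_n=a_n^{-1}e_n$. Since $g\in SL(n,F)$, $\det a(g)=1$, so $a_n^{-1}=a_1\cdots a_{n-1}=a(g)^{\Lambda_{n-1}}$. Therefore
\[
x=g^{-1}e_n=a(g)^{\Lambda_{n-1}}\,k(g)^{-1}e_n,
\]
and because $k(g)^{-1}$ is orthogonal/unitary, $\|x\|=\sqrt{\sum|x_i|^2}=a(g)^{\Lambda_{n-1}}$.

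Next I would extract the top-left $(n-1)\times(n-1)$ block from the product $g=\bar n(g)\,a(g)\,k(g)$. Write
\[
\bar n(g)=\begin{bmatrix}\bar n_{11}&0\\ *&1\end{bmatrix},\qquad a(g)=\begin{bmatrix}a_{11}&0\\0&a_n\end{bmatrix},\qquad k(g)=\begin{bmatrix}A&b\\c&d\end{bmatrix},
\]
with $\bar n_{11}$ lower-triangular unipotent of size $(n-1)\times(n-1)$ and $a_{11}=\operatorname{diag}(a_1,\dots,a_{n-1})$. Multiplying the three blocks, the $(1,1)$-block of $g$ is exactly $\bar n_{11}a_{11}A$. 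Hence $L=\bar n_{11}a_{11}A$, and taking determinants
\[
\det L=\det(\bar n_{11})\det(a_{11})\det(A)=a_1\cdots a_{n-1}\det A=a(g)^{\Lambda_{n-1}}\det A.
\]

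Combining the two steps, $\det A=\det L\,/\,a(g)^{\Lambda_{n-1}}=\det L\,/\,\sqrt{\sum|x_i|^2}=(\det L)\,a(g)^{-\Lambda_{n-1}}$, which is the claim. There is no real obstacle here; the only thing to be careful about is making sure one uses $g^{-1}$ (not $g$) in identifying $\|x\|$, since the Iwasawa convention places $\bar n$ on the left and the lower-triangular-unipotent factor is what fixes $e_n$ under the transpose/inverse.
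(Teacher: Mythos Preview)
Your proof is correct and follows essentially the same approach as the paper: block-multiply the Iwasawa factors to read off $L=\bar n_{11}a_{11}A$ (hence $\det L=\det(a_{11})\det A$), and apply $g^{-1}$ to $e_n$ to identify $\det(a_{11})=a(g)^{\Lambda_{n-1}}=\|x\|$. The only difference is the order of these two steps.
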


\begin{proof}
We write the Iwasawa decomposition
\[
g=\left[
\begin{array}
[c]{cc}%
\bar{n} & 0\\
z & 1
\end{array}
\right]  \left[
\begin{array}
[c]{cc}%
a & 0\\
0 & \det(a)^{-1}%
\end{array}
\right]  \left[
\begin{array}
[c]{cc}%
A & b\\
c & d
\end{array}
\right]
\]%
\[
=\left[
\begin{array}
[c]{cc}%
\bar{n}a & 0\\
za & \det(a)^{-1}%
\end{array}
\right]  \left[
\begin{array}
[c]{cc}%
A & b\\
c & d
\end{array}
\right]  =\left[
\begin{array}
[c]{cc}%
\bar{n}aA & \bar{n}ab\\
uaA+\det(a)^{-1}c & uab+\det(a)^{-1}d
\end{array}
\right]  .
\]
Thus
\[
\det L=\det a\det A.
\]
To compute $\det a$, note that
\[
\lbrack x_{1},...,x_{n}]^{T}=g^{-1}e_{n}=k_{n}(g)^{-1}\det(a)e_{n}.
\]
Thus%
\[
\det(a)=\left\Vert [x_{1},...,x_{n}]\right\Vert =\sqrt{\sum\left\vert
x_{i}\right\vert ^{2}}=a_{\bar{P}}(g)^{\Lambda_{n-1}}.
\]
The lemma follows.
\end{proof}

If $a\in\mathbb{R}$ then we write $\bar{a}=a$. If $R$ is a $p\times q$ matrix
with coefficients in $F$ then $R^{\ast}$ will mean $\bar{R}^{T}$ .

\begin{lemma}
\label{simple}If
\[
B=\left[
\begin{array}
[c]{cc}%
A & b\\
c & d
\end{array}
\right]  \in K_{n}
\]
with $A$ an $(n-1)\times(n-1)$ matrix then%
\[
\det(A)=d\det B
\]
so%
\[
\left\vert \det A\right\vert =\left\vert d\right\vert =\sqrt{1-\left\Vert
b\right\Vert ^{2}}
\]
If $d\neq0$ then%
\[
cA^{-1}=-\frac{b^{\ast}}{\bar{d}}.
\]

\end{lemma}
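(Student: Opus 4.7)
The approach is to exploit the two defining properties of $K_{n}$: every $B \in K_{n}$ satisfies $B^{*}B = BB^{*} = I$ (so $B^{-1} = B^{*}$) together with $\det B = 1$. Each of the three claims in the lemma then reduces to reading off a single entry or block of one of these identities.

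For the determinant formula, I expand $B^{-1}$ via the adjugate. The $(n,n)$-cofactor of $B$ is exactly $\det A$ (no sign, since $(-1)^{2n}=1$), so $(B^{-1})_{nn} = \det(A)/\det(B)$. Since $B^{-1} = B^{*}$, its $(n,n)$ entry equals $\overline{d}$, which coincides with $d$ when $F = \mathbb{R}$. Combined with $\det B = 1$ this yields $\det A = \overline{d}\det B$, and in particular $|\det A| = |d|$.

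The identity $|d| = \sqrt{1-\|b\|^{2}}$ is then immediate from the orthonormality of the columns of $B$: the last column $[b_{1},\ldots,b_{n-1},d]^{T}$ has unit norm, so $\|b\|^{2} + |d|^{2} = 1$.

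For the final assertion $cA^{-1} = -b^{*}/\overline{d}$, I read off the block consisting of row $n$ and columns $1,\ldots,n-1$ of $B^{*}B = I$. The $n$-th row of $B^{*}$ is $[\,b^{*},\;\overline{d}\,]$ and the first $n-1$ columns of $B$ assemble into $\begin{bmatrix}A\\ c\end{bmatrix}$, so the corresponding block of $B^{*}B$ equals $b^{*}A + \overline{d}\,c$, which must vanish. Since $|\det A| = |d| \neq 0$ when $d \neq 0$, the matrix $A$ is invertible, and multiplying $b^{*}A = -\overline{d}\,c$ on the right by $A^{-1}$ gives the claim. There is no real structural obstacle here: once $B^{*} = B^{-1}$ is in play, every identity is a single line of block algebra, and the only care required is tracking conjugates in the case $F = \mathbb{C}$.
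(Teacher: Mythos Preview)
Your argument is correct and follows essentially the same route as the paper: both proofs extract everything from $B^{-1}=B^{*}$ and the unit-norm of the last column. The only cosmetic difference is in the determinant step---you read $(B^{-1})_{nn}=\det A/\det B$ from the adjugate, whereas the paper multiplies $B$ by $\begin{bmatrix}I & c^{*}\\ 0 & \bar d\end{bmatrix}$ and compares determinants; both yield $\det A=\bar d\det B$, and your derivation of $cA^{-1}=-b^{*}/\bar d$ from the $(n,1\text{--}(n{-}1))$ block of $B^{*}B=I$ is the paper's computation verbatim (the paper reads the adjoint block and then transposes).
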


\begin{proof}
By definition of $K_{n},$ $BB^{\ast}=I$. Multiplying out we have%
\[
AA^{\ast}+cc^{\ast}=I,Ac^{\ast}+b\bar{d}=0,cc^{\ast}+|d|^{2}=1
\]
so%
\[
\left[
\begin{array}
[c]{cc}%
A & b\\
c & d
\end{array}
\right]  \left[
\begin{array}
[c]{cc}%
I & c^{\ast}\\
0 & \bar{d}%
\end{array}
\right]  =\left[
\begin{array}
[c]{cc}%
A & 0\\
c & cc^{\ast}+\left\vert d\right\vert ^{2}%
\end{array}
\right]  =\left[
\begin{array}
[c]{cc}%
A & 0\\
c & 1
\end{array}
\right]
\]
thus%
\[
\bar{d}\det B=\det A.
\]
The first assertion of the lemma follows since $\left\Vert b\right\Vert
^{2}+\left\vert d\right\vert ^{2}=1.$

To prove the second part $B^{\ast}B=I$ implies that
\[
A^{\ast}b+dc^{\ast}=0.
\]
Taking adjoints%
\[
b^{\ast}A=-\bar{d}c
\]
so if $d\neq0$
\[
cA^{-1}=-\frac{b^{\ast}}{\bar{d}}.
\]

\end{proof}

\subsection{An elementary estimate}

Let for $u\in F^{m}$ and $0<\alpha<1$
\[
\psi_{\alpha,u,m}(x)=(1+\sum_{i=1}^{m}|x_{i}|^{2}+|\left\langle
u,x\right\rangle |^{2})\prod_{i=1}^{m}(1+|x_{i}|^{2})
\]
then

\begin{lemma}
\label{mainLemma}If $0\leq\varepsilon<1$ then there exists $0<C_{\alpha
,m,\varepsilon}$ $<\infty$ such that%
\[
\int_{F^{m}}\psi_{\alpha,u,m}(x)^{-\frac{\dim_{\mathbb{R}}F}{2}}dx\leq
C_{\alpha,m,\varepsilon}(1+\left\Vert u\right\Vert ^{2})^{-\alpha
\varepsilon\frac{\dim_{\mathbb{R}}F}{2}}.
\]

\end{lemma}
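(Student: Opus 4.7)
My plan is to combine Lemma \ref{inequalities} with a careful one-dimensional integration to extract the decay in $\|u\|$. Writing $s = \dim_{\mathbb{R}} F$ and reading the first factor of $\psi_{\alpha,u,m}$ as being raised to the power $\alpha$ (the only way $\alpha$ enters the inequality), apply Lemma \ref{inequalities} with $A_0 = |\langle u, x\rangle|^2$, $A_i = |x_i|^2$, and weights $(c_0, c_1, \ldots, c_m) = (\varepsilon, (1-\varepsilon)/m, \ldots, (1-\varepsilon)/m)$ (summing to $1$) to obtain
\[
1 + \|x\|^2 + |\langle u, x\rangle|^2 \geq (1+|\langle u,x\rangle|^2)^{\varepsilon}\prod_{i=1}^m(1+|x_i|^2)^{(1-\varepsilon)/m}.
\]
Multiplying by the visible factor $\prod_i(1+|x_i|^2)^\alpha$ and raising to the $-s/2$ power yields the pointwise bound
\[
\psi_{\alpha,u,m}(x)^{-s/2} \leq (1+|\langle u,x\rangle|^2)^{-\gamma}\prod_{i=1}^m(1+|x_i|^2)^{-\delta},
\]
with $\gamma = \alpha\varepsilon s/2$ and $\delta = (1+\alpha(1-\varepsilon)/m)s/2$. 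Crucially, $\delta > s/2$ so each coordinate factor is integrable over $F$, while $\gamma < s/2$ since $\alpha\varepsilon < 1$.

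Since the integrand on the right is permutation-symmetric under simultaneous permutation of $u$ and $x$, I may assume $|u_1| = \max_i |u_i|$, hence $|u_1| \geq \|u\|/\sqrt{m}$; the case $u=0$ is immediate. Integrate $x_1$ first, with $c = \sum_{i \geq 2} u_i x_i$ held fixed, and study
\[
K(c) := \int_F (1+|u_1 x_1 + c|^2)^{-\gamma}(1+|x_1|^2)^{-\delta}\,dx_1.
\]
The main claim is the uniform bound $K(c) \leq C(1+|u_1|^2)^{-\gamma}$. For $|u_1| \leq 1$ this is trivial by bounding the first factor by $1$. For $|u_1| > 1$, substitute $y = u_1 x_1 + c$ to get
\[
K(c) = |u_1|^{-s}\int_F (1+|y|^2)^{-\gamma}\Bigl(1+\tfrac{|y-c|^2}{|u_1|^2}\Bigr)^{-\delta}dy,
\]
then split the $y$-integral on $\{|y-c| \leq |u_1|\}$ (on which the last factor is bounded by $1$) and its complement (on which it is at most $(|u_1|/|y-c|)^{2\delta}$), with a sub-split on $|c| \lessgtr |u_1|$. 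Radial integration together with $0 \leq \gamma < s/2$, $\delta > s/2$, and $\gamma+\delta > s/2$ produces the desired bound on each piece, uniformly in $c$.

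Once the bound on $K(c)$ is in hand, integrating over $x_2, \ldots, x_m$ against the finite product measure $\prod_{i \geq 2}(1+|x_i|^2)^{-\delta}dx_i$ (finite because $\delta > s/2$) and using $|u_1| \geq \|u\|/\sqrt{m}$ gives
\[
\int_{F^m}\psi_{\alpha,u,m}(x)^{-s/2}\,dx \leq C\,(1+|u_1|^2)^{-\gamma} \leq C'\,(1+\|u\|^2)^{-\alpha\varepsilon s/2},
\]
which is the lemma. The genuine work lies in the uniform estimate for $K(c)$: the case distinctions based on the relative sizes of $|c|$, $|u_1|$, and $|y|$ are somewhat tedious, but each resulting integral is elementary once the exponent constraints $\gamma < s/2 < \delta$ and $\gamma + \delta > s/2$ are applied.
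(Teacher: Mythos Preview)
Your argument is correct (the phrase ``multiplying by the visible factor $\prod_i(1+|x_i|^2)^\alpha$'' should read ``raising to the $\alpha$-th power and multiplying by $\prod_i(1+|x_i|^2)$'', but your stated exponents $\gamma,\delta$ are right). The route, however, is genuinely different from the paper's. The paper does not reduce to a convolution-type integral $K(c)$; instead, for $\|u\|\geq 1$ it splits $F^m$ into the slab $W_{u,1}=\{|\langle v,x\rangle|\leq 1\}$ and its complement $W_{u,2}$, where $v=u/\|u\|$. On $W_{u,2}$ one has $|\langle u,x\rangle|^2\geq\|u\|^2$, so the first factor of $\psi_{\alpha,u,m}$ already dominates $(1+\|u\|^2)^{\alpha\varepsilon}\psi_{(1-\varepsilon)\alpha,0,m}$ and the bound is immediate. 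On $W_{u,1}$ the paper makes the linear change replacing the coordinate $x_q$ (with $|u_q|$ maximal) by $y_q=\langle v,x\rangle$; the remaining $m-1$ coordinates produce a finite $\psi_{(1-\varepsilon)\alpha,0,m-1}$ integral, and the $y_q$-integral is the single explicit estimate $\int_{|z|\leq 1}(1+\|u\|^2|z|^2)^{-\varepsilon\alpha s/2}dz\leq C_\beta(1+\|u\|^2)^{-\varepsilon\alpha s/2}$. Your approach trades this geometric splitting for a global pointwise bound and then a one-variable integration in the original coordinates; the price is the multi-case analysis of $K(c)$ (four or five regimes in $|c|,|y|,|y-c|$), whereas the paper's one-dimensional endgame is a single elementary computation. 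Both buy the same conclusion with the same dependence on $\alpha,\varepsilon,m$; the paper's version is shorter to write out in full, while yours avoids introducing new coordinates.
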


If $u=0$ then since%
\[
\psi_{\alpha,0,m}(x)=(1+\sum_{i=1}^{m}|x_{i}|^{2})\prod_{i=1}^{m}%
(1+|x_{i}|^{2})\geq\prod_{i=1}^{m}(1+|x_{i}|^{2})^{1+\frac{1}{m}}
\]
we see that
\[
I_{\alpha,m}=\int_{F^{m}}\psi_{\alpha,u,m}(x)^{-\frac{\dim_{\mathbb{R}}F}{2}%
}dx<\infty\text{.}
\]
If $\left\Vert u\right\Vert \leq1$ then
\[
(1+\left\Vert u\right\Vert ^{2})^{-\alpha\varepsilon\frac{\dim_{\mathbb{R}}%
F}{2}}\geq2^{-\alpha\varepsilon\frac{\dim_{\mathbb{R}}F}{2}}.
\]
Also,%
\[
\psi_{\alpha,u,m}(x)\geq\psi_{\alpha,0,m}(x)
\]
so%
\[
\int_{F^{m}}\psi_{\alpha,u,m}(x)^{-\frac{\dim_{\mathbb{R}}F}{2}}%
dx\leq2^{-\alpha\varepsilon\frac{\dim_{\mathbb{R}}F}{2}}I_{\alpha
,m}(1+\left\Vert u\right\Vert ^{2})^{-\alpha\varepsilon\frac{\dim_{\mathbb{R}%
}F}{2}}.
\]
Thus we need only prove the result for $\left\Vert u\right\Vert \geq1$. Set
$v=\frac{u}{\left\Vert u\right\Vert }$. Let $W_{u,1}=\{x\in\mathbb{R}%
^{m}||(v,x)|\leq1\}$ and $W_{u,2}=\{x\in\mathbb{R}^{m}||(v,x)|\geq1\}$. Then%
\[
\int_{F^{m}}\psi_{\alpha,u,m}(x)^{-\frac{\dim_{\mathbb{R}}F}{2}}%
dx=\int_{W_{u,1}}\psi_{\alpha,u,m}(x)^{-\frac{\dim_{\mathbb{R}}F}{2}}%
dx+\int_{W_{u,2}}\psi_{\alpha,u,m}(x)^{-\frac{\dim_{\mathbb{R}}F}{2}}dx.
\]
We have on $W_{u,2}$%
\[
\psi_{\alpha,u,m}(x)\geq(1+\sum_{i=1}^{m}|x_{i}|^{2}+\left\Vert u\right\Vert
^{2})^{\alpha}\prod_{i=1}^{m}(1+|x_{i}|^{2})\geq
\]%
\[
(1+\left\Vert u\right\Vert ^{2})^{\varepsilon}\psi_{(1-\varepsilon)\alpha
,,m}(x).
\]
So%
\[
\int_{W_{u,2}}\psi_{\alpha,u,m}(x)^{-\frac{\dim_{\mathbb{R}}F}{2}}dx\leq
I_{(1-\varepsilon)\alpha,m}(1+\left\Vert u\right\Vert ^{2})^{-\alpha
\varepsilon\frac{\dim_{\mathbb{R}}F}{2}}.
\]
We now consider the integral over $W_{u,1}$. Note that
\[
\int_{F^{m}}\psi_{\alpha,u,m}(x)^{-\frac{\dim_{\mathbb{R}}F}{2}}dx=\int%
_{F^{m}}\psi_{\alpha,T_{\xi}(u),m}(x)^{-\frac{\dim_{\mathbb{R}}F}{2}}dx
\]
with $T_{\xi}(u)=(\xi_{1}u_{1},...,\xi_{m}u_{m})$ and $\left\vert \xi
_{i}\right\vert =1$. Thus we may assume that $u_{i}\in\mathbb{R}$ and
$u_{i}\geq0$ in the proof of the lemma. Let $q$ be such that $u_{q}$ \ is
maximum. Define new linear coordinates by%
\[
y_{i}=x_{i},i<q,y_{q}=\left\langle x,v\right\rangle ,y_{i}=x_{i},q<i\leq m.
\]
Then the Jacobian of the transformation $x\mapsto y$ is $v_{q}^{\dim
_{\mathbb{R}}F}$. We also note that since $\left\Vert v\right\Vert =1,1\geq$
$v_{q}\geq\frac{1}{\sqrt{m}}.$ Also,%
\[
x_{q}=\frac{y_{q}-\sum_{i\neq q}v_{i}y_{i}}{v_{q}}
\]
and%
\[
\left\langle x,u\right\rangle =\left\Vert u\right\Vert y_{q}\text{.}
\]
Writing out $\psi_{\alpha,u,m}(x)$ in terms of the $y_{i}$ we have
($0<\varepsilon<1)$
\[
\prod_{i<q}(1+\left\vert y_{i}\right\vert ^{2})\left(  1+\left\vert
\frac{y_{q}-\sum_{i\neq q}v_{i}y_{i}}{v_{q}}\right\vert ^{2}\right)  \times
\]%
\[
\prod_{i>q}(1+\left\vert y_{i}\right\vert ^{2})(1+\sum_{j\neq q}\left\vert
y_{j}\right\vert ^{2}+\left\vert \frac{y_{q}-\sum_{i\neq q}v_{i}y_{i}}{v_{q}%
}\right\vert ^{2}+\left\Vert u\right\Vert ^{2}|y_{q}|^{2})^{\alpha}\geq
\]%
\[
\prod_{i\neq q}(1+\sum_{j\leq i}\left\vert y_{j}\right\vert ^{2})\left(
1+\sum_{.j\neq q}\left\vert y_{j}\right\vert ^{2}+\left\Vert u\right\Vert
^{2}|y_{q}|^{2}\right)  ^{\alpha}\geq
\]%
\[
\prod_{i\neq q}(1+\sum_{j\leq i}\left\vert y_{j}\right\vert ^{2})\left(
1+\sum_{.j\neq q}\left\vert y_{j}\right\vert ^{2}\right)  ^{(1-\varepsilon
)\alpha}\left(  1+\left\Vert u\right\Vert ^{2}|y_{q}|^{2}\right)
^{\varepsilon\alpha}
\]%
\[
=\psi_{(1-\varepsilon),\alpha,0,m-1}(y)\left(  1+\left\Vert u\right\Vert
^{2}|y_{q}|^{2}\right)  ^{\varepsilon\alpha}.
\]
Thus (since the real Jacobian of the transformation $x\rightarrow y$ is
$\frac{1}{y_{q}^{\dim_{\mathbb{R}}F}}$ and $y_{q}\geq\frac{1}{^{\sqrt{m}}}$
\[
\int_{W_{u,1}}\psi_{\alpha,u,m}(x)^{-\frac{\dim_{\mathbb{R}}F}{2}}dx\leq
\frac{1}{m^{\frac{\dim_{\mathbb{R}}F}{2}}}I_{(1-\varepsilon)\alpha,m-1}%
\int_{\left\vert z\right\vert \leq1}\frac{dz}{\left(  1+\left\Vert
u\right\Vert ^{2}|z|^{2}\right)  ^{\varepsilon\alpha\frac{\dim_{\mathbb{R}}%
F}{2}}}.
\]
Thus, to complete the argument in the case at hand we need to show that if
$a\in\mathbb{R}$, $a\geq1,0<\beta<1$ then
\[
\int_{\left\vert y\right\vert \leq1}\left(  1+a^{2}|y|^{2}\right)  ^{-\beta
/2}dy\leq C_{\beta}(1+a^{2})^{-\beta}.
\]
$F=\mathbb{R}$: then
\[
\int_{0}^{1}\frac{dx}{(1+a^{2}x^{2})^{\frac{\beta}{2}}}\leq a^{-\beta}\int%
_{0}^{1}\frac{1}{x^{\beta}}dx=\frac{1}{1-\beta}a^{-\beta}.
\]
Since $a\geq1$ we have $2a^{2}\geq1+a^{2}$ so $2^{-\beta}a^{-\beta}%
\leq(1+a^{2})^{-\beta/2}$.Take%
\[
C_{\beta}=\frac{1}{1-\beta}2^{\beta}.
\]
$F=\mathbb{C}:$
\[
\int_{\left\vert x\leq1\right\vert }\frac{dx}{(1+a^{2}\left\vert x\right\vert
^{2})^{\beta/2}}=2\pi\int_{0}^{1}\frac{sds}{(1+a^{2}s^{2})^{\beta/2}}
\]%
\[
=\pi\int_{0}^{1}\frac{ds}{(1+a^{2}s)^{\beta/2}}\leq\frac{\pi}{a^{\beta}}%
\int_{0}^{1}\frac{ds}{s^{\beta/2}}\leq\frac{\pi}{a^{\beta}(1-\beta/2)}
\]
Take%
\[
C_{\beta}=\frac{\pi}{1-\beta/2}2^{\beta}.
\]

\end{document}